\theoremstyle{definition}
\newtheorem{theo}{Theorem}[subsection]
\newtheorem{pr}[theo]{Proposition}
 \newtheorem{coro}[theo]{Corollary}
\theoremstyle{remark}
\newtheorem{rema}[theo]{Remark}
\theoremstyle{definition}
\newtheorem{defi}[theo]{Definition}
\numberwithin{equation}{subsection}
\newcommand\ob{^{-1}}
\newcommand\dmge{DM^{eff}_{gm}{}}
\newcommand\cha{\operatorname{char}}
\newcommand\mg{M_{gm}}
\newcommand\obj{\operatorname{Obj}}
\newcommand\id{\operatorname{id}}
\newcommand\cu{\underline{C}}
\newcommand\du{\underline{D}}
\newcommand\com{\mathbb{C}}
\newcommand\z{{\mathbb{Z}}}
\newcommand\q{{\mathbb{Q}}}
\newcommand\af{\mathbb{A}}
\newcommand\p{\mathbb{P}}
\newcommand\ff{\mathbb{F}}
\newcommand\pt{\operatorname{pt}}
\newcommand\eps{\varepsilon}
\newcommand\lam{\Lambda}
\newcommand\ns{\{0\}}
\DeclareMathOperator\inli{\varinjlim}
\newcommand\chow{\operatorname{Chow}}
\newcommand\chowlam{\operatorname{Chow}_{\lam}}
\newcommand\sv{\operatorname{SmVar}}
\newcommand\spe{\operatorname{Spec}}
\newcommand\modd{\operatorname{Mod}}
\newcommand\zop{{\mathbb{Z}[\frac{1}{p}]}}
\newcommand\zoh{\z[\frac{1}{2}]}
\newcommand\pfi{\mathcal{PF}i}
\newcommand\sht{SH}
\newcommand\shtc{SH^c}
\newcommand\shtck{SH^c(k)}
\newcommand\shtk{SH(k)}
\newcommand\she{SH^{eff}}
\newcommand\dmeb{DM^{eff}}
\newcommand\dm{DM}
\newcommand\dmk{DM(k)}
\newcommand\dmkp{DM(k')}
\newcommand\dmc{DM^c}
\newcommand\dmck{DM^c(k)}
\newcommand\tsh{t_{hom}^{SH}}
\newcommand\tdm{t_{hom}^{DM}}
\newcommand\tshlam{t_{\lam}^{SH}}
\newcommand\tdmlam{t_{\lam}^{DM}}
\newcommand\tshelam{t_{\lam}^{SH^{eff}}}
\newcommand\tdmelam{t_{\lam}^{DM^{eff}}}
\newcommand\thom{t_{hom}}
\newcommand\msh{\sht(m)} 
\newcommand\mdm{\dm(m)} 
\newcommand\mshlam{\shlam(m)} 
\newcommand\mdmlam{\dmlam(m)} 
\newcommand\shtlamk{SH_{\lam}(k)}
\newcommand\dmlamk{DM_{\lam}(k)}
\newcommand\shtlam{SH_{\lam}}
\newcommand\dmlam{DM_{\lam}}
\newcommand\mgl{\operatorname{MGl}}
\newcommand\mglmod{\operatorname{MGl}-\modd}
\newcommand\shmgl{D^{\operatorname{MGl}}}
\newcommand\shmglam{D^{\mgl}_{\lam}}
\newcommand\emz{H_{\z}}
\newcommand\emlam{H_{\lam}}
\newcommand\emlamo{{H}'_{\lam}} 
\newcommand\filt{\operatorname{Fil}_{\operatorname{Tate}}}
\newcommand\shtohk{SH(k)[\frac{1}{2}]}
\newcommand\shtpl{SH^+}
\newcommand\shtplk{SH^+(k)}
\newcommand\shtmik{SH^-(k)}
\newcommand\mk{M_k}
\newcommand\mklam{M_{k,\lam}}
\newcommand\mklamc{M^c_{k,\lam}}
\newcommand\sinft{\Sigma_T^{\infty}}
\newcommand\sinftlam{\Sigma_{T,\lam}^{\infty}}
\newcommand\mglam{M_{gm,\lam}}
\newcommand \shelam{SH^{eff}_{\lam}}
\newcommand \shelamc{SH^{eff,c}_{\lam}}
\newcommand \dmelam{DM^{eff}_{\lam}}
\newcommand \dmelamc{DM^{eff,c}_{\lam}}
\newcommand \shlam{SH_{\lam}}
\newcommand \shlamk{SH_{\lam}(k)}
\newcommand \shlamc{SH^c_{\lam}}
\newcommand\dmlamc{\dm_{\lam}^c}
\newcommand\afo{\mathbb{A}^1}
\DeclareMathOperator\imm{\operatorname{Im}}
\DeclareMathOperator\co{\operatorname{Cone}}
\newcommand\hrt{{\underline{Ht}}}
\begin{document}

 \title{On  infinite effectivity of motivic spectra and the vanishing of their motives} 
  \author{M.V. Bondarko
   \thanks{Research is supported by the Russian Science Foundation grant no. 16-11-10200.}}\maketitle

\begin{abstract}
We study  the kernel of the "compact motivization" functor $M_{k,\lam}^c:\shlamc(k)\to \dmlamc(k)$ (i.e., we try to  describe those compact objects of the $\lam$-linear version of $SH(k)$ whose associated motives vanish; here $\z\subset \lam \subset \q$). We also investigate the question when the $0$-homotopy connectivity of $M^c_{k,\lam}(E)$ 
ensures the $0$-homotopy connectivity of $E$ itself 
(with respect to the homotopy $t$-structure $\tshlam$ for $\shlam(k)$). We prove that 
 the kernel of $\mklamc$  vanishes  and the corresponding "homotopy connectivity detection" statement is also valid if and only if $k$ is a non-orderable field; this is an easy consequence of 
 similar results of T. Bachmann (who considered the case where the cohomological $2$-dimension of $k$ is finite). 
Moreover, for an arbitrary $k$ 
 the kernel  in question does not contain any $2$-torsion 
(and the author also suspects that all its elements are odd torsion  unless $\frac{1}{2}\in \lam$).
Furthermore, if  the exponential characteristic of $k$ is invertible in $\lam$ then this kernel consists exactly of "infinitely effective" (in the sense of Voevodsky's slice filtration) objects of  $\shlamc(k)$. 

 The results and methods of this paper may 
 be useful for the study  of motivic spectra. In particular, they (combined with other ideas of Bachmann) imply  the tensor invertibility of  motivic spectra of affine quadrics over non-orderable fields.
 We also generalize a theorem of A. Asok.
\end{abstract}

\tableofcontents

 \section*{Introduction}

It is well known that for a perfect field $k$ both the (Morel-Voevodsky's) motivic stable homotopy category $\shtk$ and  Voevodsky's motivic category
$\dmk$ are important for the study of cohomology of $k$-varieties. The roles of these categories are somewhat distinct: whereas $\sht(k)$ is "closer to the geometry" of varieties, $\dmk$ is somewhat easier to deal with. For instance, we know much more on morphisms in  $\dmk$ than in $\shtk$; this information yields the existence of  so-called Chow weight structures on $\dmc(k)\subset \dmk$ (as shown in \cite{bws} and \cite{bzp}; below we will mention an interesting application of this result described in \cite{bachinv}). 

Now, there is a connecting functor $M_k:\sht(k)\to \dmk$ (that sends the motivic spectra of smooth varieties into their motives); so it is rather important to describe the extent to which $M_k$ is conservative. Whereas the "whole" $M_k$ is never conservative (as demonstrated in Remark \ref{rshmi}(1) below), 
 Theorem 16 of  \cite{bachcons} states that the restriction $M_k^c$ of $M_k$ to compact objects is conservative whenever $k$ is of finite cohomological $2$-dimension. 

The current paper grew out of the following  observation:  this theorem can be generalized to the case of  an
arbitrary non-orderable (perfect) 
$k$ via a simple "continuity" argument (i.e., if $k=\inli k_i$ then the conservativity of all $M_{k_i}^c$ implies that for $M_k^c$). We also demonstrate (in Remark \ref{rshmi}(3)) that this conservativity statement 
fails whenever $k$ is a formally real field (though we conjecture that the  
 kernel of $M_k^c$ consists of torsion elements only; see Remark \ref{rshmi}(4)); thus we answer the question when $M_k^c$ is conservative completely. Moreover, we  extend to arbitrary non-orderable fields the stronger part of  Bachmann's Theorem 16(b); 
 so we prove that the $r$-homotopy connectivity of $M_{k}^c(E)$ (for $E\in \obj \shtc(k)$) ensures the $r$-homotopy connectivity of $E$ itself (here $r$-homotopy connectivity means belonging to the $\thom\ge r+1$-part for the corresponding homotopy $t$-structure). 
Lemma 19 of ibid. also 
 gives a similar result for $E$ being a $2$-torsion compact motivic spectrum over an arbitrary perfect $k$. 

Though  these continuity arguments are rather simple, the author believes that the results described above 
are quite useful.
To illustrate their utility, we (easily) deduce a certain generalization of Theorem 2.2.1 of \cite{asok}.\footnote{The formulations of ibid. along with  that of \cite{bachcons} 
 indicate that that the "continuity" arguments applied in this paper are new to a significant part of specialists in the motivic homotopy theory.}  So, we extend this theorem (in  Proposition \ref{pasok}) to the case of an arbitrary non-orderable perfect base field (and to a not necessarily proper 
 $X/k$). Moreover, (for the sake of generality; in this proposition as well as in the central  results of this paper) we actually consider  the $\lam$-linear versions of the statements described above, where $\lam$ is an arbitrary (unital) coefficient subring of $\q$. For a smooth proper $X$ this corresponds to studying  the conditions ensuring that $X$ contains a $0$-cycle whose degree is invertible in $\lam$ and that the kernel of the degree homomorphism
 $\operatorname{Chow}_{0}(X_L)\to \z$ is killed by $-\otimes_\z \lam$
for any field extension $L/k$ (see Remark \ref{rasok}(2)); so, the case $\lam\neq \z$ may be quite interesting as well.

This   $\lam$-linear setting has some more advantages. In particular, we describe an argument deducing 
our central Theorem \ref{tbach}(i) 
 from the "slice-convergence" results of \cite{levconv} (avoiding the usage of the more complicated results of \cite{bachcons}); yet for this argument we have to assume that the characteristic $p$ of $k$ is invertible in $\lam$ whenever it is positive. However, it appears that the most interesting cases are $\lam=\z$ and $\lam=\zop$ (for $p>0$).

Another application of our results (that  
 generalizes one more statement formulated by Bachmann and requires a coefficient ring 
  containing 
	 $\frac{1}{p}$ if $p>0$) is the following one: the 
cone of  the "structure morphism" $\sinftlam(A_+)\to S^0_{\lam}$ is $\otimes$-invertible in $\shtlam$ whenever $k$ is non-orderable, $A$ is the (affine)  zero set of $\phi-a$ for $\phi$ being a non-zero quadratic form, $0\neq a\in k$, $p$ is distinct from $ 2$ and is invertible in $\lam$ if it is positive. We deduce this statement from Theorem 33 of \cite{bachinv} (whose proof is based on the usage of the Chow weight structure on $\dmck[\frac 1 p]\subset \dmk[\frac 1 p]$).

Now we describe the most original result of this paper (at least,  it appears not to be formulated in the literature 
in any form). We prove that an object $E$ of $\shlamc(k)$ belongs to $\shelam(k)(r)$ 
(to the $r$th level of the $\lam$-linearized  version of the Voevodsky's  slice filtration; we also say that the objects of $\shelam(k)(r)$ are $r$-effective) if and only if $\mklam(E)$ belongs to  $\dmelam(k)(r)$. Moreover, we establish a certain "$\thom$-connective" version of this statement. Assuming that $p$ is invertible in $\lam$ whenever it is positive, we immediately deduce the "infinite effectivity" 
 of objects in the compact motivization kernel, and  also say when  $M_{k,\lam}(E)\in \dmlam(k)_{\tdmlam \ge r+1}$ for $E\in  \shlam(k)_{\tshlam\ge r}$. 

We also note that 
continuity arguments 
similar to ones applied in this paper can be useful for the study of a wide range of motivic questions. For this reason we discuss these continuity matters in a rather detailed and "axiomatic" manner.

Let us  now describe the contents  of the paper. Some more information of this sort can be found at the beginnings of sections.

In \S\ref{sprelim} we recall some basics on (general) triangulated categories, $\sht(-)$ and $\dm(-)$,  on the cohomological dimension of fields and their Grothendieck-Witt rings of quadratic forms. We also introduce the $\lam$-linear versions of  $\sht(-)$ and $\dm(-)$ and discuss certain continuity arguments.

In \S\ref{sextres} we recall some more results on motivic categories (we formulate them in the $\lam$-linear setting). They enable us to generalize 
 certain results of Bachmann (as well as  Theorem 2.2.1 of \cite{asok}) to the case of arbitrary non-orderable base fields. We also  prove that the restriction of 
 $\mklamc$ to $2$-torsion objects is conservative over any perfect $k$. 

 \S\ref{skernel} 
we prove that the compact motivization functor $\mklamc$ "strictly respects" the slice filtrations (on $\shlamc(k)$ and $\dmlamc(k)$, respectively) as well as the (more precise) homotopy $t$-structure analogue of this result. These statements give an alternative method 
 for proving 
Theorem \ref{tbach}(i) (that is the central result of this paper); so we sketch an argument deducing it from the results of \cite{levconv} (under the additional assumption that $p$ is invertible in $\lam$ whenever it is positive; note  that 
 \cite{bachcons} relies on the results of M. Levine as well). 
 Lastly, we explain that in all our results the categories $\dmlam(-)$ may be replaced by the categories $\shmglam(-)$ of "cobordism-modules".

The author is deeply grateful to  prof. Alexey Ananyevskiy, prof. Tom Bachmann, and to the referee for their really interesting comments.

\section{Preliminaries} 
\label{sprelim}

In \S\ref{snotata} we introduce some notation and a few conventions that we will use throughout the paper. 

In \S\ref{scomploc} we discuss compactly generated triangulated categories along with their $\lam$-linear versions (for $\lam\subset \q$, i.e., we invert some set $S$ of primes in a triangulated category $\cu$ to obtain the corresponding $\cu_{\lam}$).

In \S\ref{sshcont} 
we recall some basics on the motivic categories $\sht(-)$ and $\dm(-)$. We also note that these statements generalize to $\shlam(-)$ and $\dmlam(-)$. Moreover, we describe  (abstract versions of) our basic continuity arguments.

In \S\ref{switt} we recall some 
 well-known properties of the cohomological dimension  of (essentially finitely generated) fields and relate the Grothendieck-Witt ring of $k$ to $\shtk(S^0,S^0)$.

\subsection{Some notation and terminology}\label{snotata} 

\begin{itemize}

\item
For categories $C,D$ we write 
$D\subset C$ if $D$ is a full 
subcategory of $C$.

 For a category $C$ and $ X,Y\in\obj C$, the set of  $C$-morphisms from  $X$ to $Y$ will be denoted by
$C(X,Y)$.

\item
Below $\cu$  will always denote a triangulated category.

For $E\in \obj \cu$ we will say that it is {\it $2$-torsion} if there exists $t>0$ such that $2^t\id E=0$. 

We will use the term {\it exact functor} for a functor of triangulated categories (i.e., for a  functor that preserves the structures of triangulated
categories).

\item
For a triangulated category $\cu$ and some $D\subset \obj \cu$ we will call the smallest subclass $D'$ of $\obj \cu$ that contains $D$ and is closed with respect to all $\cu$-extensions and retractions the {\it envelope} of $D$ (so, it is thick if $D[1]= D$).

\item
Below  $k$ and $F$ will always be  perfect fields of characteristic $p$ (and the case $p=0$ will be the most interesting for us); $k$ will usually denote "the base field" for the motivic categories that we consider at the given moment (whereas $F$ will often run through all perfect fields). 
$L$ will  denote a field of characteristic $p$ also; 
we will not assume $L$ to be perfect (by default). 

 The category of all perfect fields  will be denoted by $\pfi$. 

\item
When writing $k=\inli k_i$ we will always assume that $k_i$ 
 form a directed system of perfect fields (so, this is an inductive limit). 

\item
Now let $\ff$ be a $2$-functor  from $\pfi$ into a certain $2$-category of categories (that will actually  be the $2$-category of tensor triangulated categories for all the examples of this paper). 
Then for $m:k\to k'$ being a $\pfi$-morphism and $E\in \obj \ff(k)$  the object $\ff(m)(E)$ of $ \ff(k')$ will often be denoted by $E_{k'}$. If an object  $E'$ of $\ff(k')$  is isomorphic to $ E_{k'}$ (for some $E\in \obj \ff(k)$) then we will say that $E'$ is {\it defined over $k$}.

\item
 We will say that the {\it continuity property for morphisms} is fulfilled for $\ff$ 
 if 
$\ff(k)(M^0_k,N^0_k)\cong \inli _i \ff(k_i)(M^0_{k_i},N^0_{k_i})$ whenever $k=\inli_i k_i$, all these fields are extensions of a certain perfect field $k_0$, whereas $M^0$ and $N^0$ are some objects of $\ff(k_0)$. 

This assumption is (an important) part of the following   continuity property for $\ff$ (cf. \S4.3 of \cite{cd}): we will say that $\ff$ is {\it continuous} if we have $\ff(k)\cong \inli \ff(k_i)$ whenever $k=\inli k_i$ (i.e., we consider the $2$-category colimit with   the transition functors being the result of applying $\ff$ to the corresponding $\pfi$-morphisms).

\item
We will say that $k$ is non-orderable whenever $-1$ is a sum of squares in it.

\item
 $\sv$ will denote the 
 set of (not necessarily connected) smooth $k$-varieties (and in some occasions we will consider $\sv$ as a category). More generally, $\sv(F)$ will denote the set of smooth $F$-varieties. 

$\pt$ will always denote the point $\spe k$ (over $k$); $\p^1$ will denote the projective line $\p^1(k)$, and $\afo=\afo(k)$ is the affine line.
\end{itemize}

\subsection{On compactly generated categories and localizing coefficients for them}\label{scomploc} 

In this subsection  $\cu$ will denote a triangulated category  closed with respect to all small coproducts. 
We recall the following (more or less) well-known definitions.

\begin{defi}\label{dcomp}
1. We will say that an object $M$ of $ \cu$ is {\it compact} whenever the functor $\cu(M,-)$ respects coproducts.

2. We will say that a class $C=\{C_i\}\subset \obj \cu$ generates a subcategory $\du\subset \cu $ {\it as a localizing subcategory} if $\du$ equals 
 the smallest full strict triangulated subcategory of $\cu$ that is closed with respect to small coproducts and contains $C$.


3. We will say that $C=\{C_i\}$  {\it compactly generates} $\cu$ (or that the $C_i$ compactly generate $\cu$) if $C$ is a set, all $C_i$ are compact (in $\cu$), and  $C$ generates $\cu$  as its own localizing subcategory.

We will say that $\cu$ is compactly generated whenever there exists some set of compact generators of this sort.

\end{defi}

\begin{rema}\label{rcomp}
 Recall 
(see  Lemma 4.4.5 of \cite{neebook}) that if
$C$ compactly generates $\cu$ then the full subcategory $\cu^c$ of compact objects of $\cu$ is the smallest thick 
subcategory of $\cu$ containing $C$ (i.e., if $\obj \cu^c$ is the envelope of $\cup_{j\in \z} C[j]$ in the sense described in \S\ref{snotata}).
Moreover, 
 $\cu^c$ is {\it idempotent complete}, i.e., any idempotent endomorphism gives a splitting in it. 
\end{rema}

In the current paper we use the "homological convention" for $t$-structures (following \cite{morintao} and \cite{bachcons}). 
Thus a $t$-structure $t$ for $\cu$ gives homological functors $H^t_j$ from $\cu$ to the heart $\hrt$ of  $t$ such that $H^t_j=H_0^t\circ [-j]$ for any $j\in \z$.
If $t$ is {\it non-degenerate} (i.e., the collection $\{H^t_j\}$ for $j\in \z$ is conservative; we will call these functors $t$-homology) then $E\in \cu_{t\le 0}$ (resp. $E\in \cu_{t\ge 0}$) if and only if $H_j^t(E)=0$ for all $j>0$ (resp. $j<0$).

We recall the following existence statement.

 \begin{pr}\label{paisle}
Let $C\subset \obj \cu$ be a set of compact objects. Then there exists a unique  $t$-structure $t$ for $\cu$ such that $\cu_{t\ge 0}$ is the smallest subclass of $\obj \cu$ that contains  $C$ and  is stable with respect to extensions, the  suspension $[+1]$, and arbitrary (small) coproducts.
\end{pr}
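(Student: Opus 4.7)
The plan is to construct the $t$-structure by taking the stated subclass as the non-negative part and its right orthogonal as the (shift of the) non-positive part, then verifying the truncation axiom via a small-object-style construction that crucially uses the compactness hypothesis on the $C_i$. This is essentially the standard Alonso--Jeremías--Souto theorem on generation of $t$-structures in compactly generated settings.

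More precisely, I would let $\cu_{t\ge 0}$ denote the subclass described in the statement (it is well-defined since the intersection of all such subclasses works). Then set $\cu_{t\le -1}=\cu_{t\ge 0}^{\perp}=\{Y\in\obj\cu:\cu(A,Y)=0 \text{ for all } A\in\cu_{t\ge 0}\}$, and put $\cu_{t\le 0}=\cu_{t\le -1}[1]$. The inclusion $\cu_{t\ge 0}[1]\subset \cu_{t\ge 0}$ holds by construction, and $\cu_{t\le -1}[-1]\subset\cu_{t\le -1}$ then follows from the first inclusion together with the definition of right orthogonal. Vanishing of $\cu(\cu_{t\ge 0},\cu_{t\le -1})$ is immediate by definition. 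So the only non-formal axiom is the existence of a truncation distinguished triangle $A\to X\to Y\to A[1]$ with $A\in\cu_{t\ge 0}$ and $Y\in\cu_{t\le -1}$ for every $X\in\obj\cu$.

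For truncation I would run the following transfinite construction. Given $X=X_{0}$, set
\[
A_{0}=\bigoplus_{i,\;n\ge 0,\;f\in\cu(C_{i}[n],X_{0})} C_{i}[n],
\]
take the tautological map $A_{0}\to X_{0}$, and let $X_{1}$ be its cone. Iterate: having constructed $X_{s}$, define $A_{s}$ analogously from all maps $C_{i}[n]\to X_{s}$ with $n\ge 0$, fit $A_{s}\to X_{s}\to X_{s+1}$ into a triangle, and pass to the homotopy colimit $X_{\infty}=\operatorname{hocolim} X_{s}$. The resulting fibre $A$ of $X\to X_{\infty}$ is built by iterated extensions, shifts, and coproducts from the $C_{i}$, so $A\in\cu_{t\ge 0}$. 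The key point where compactness enters: for any $i$ and any $n\ge 0$, a morphism $C_{i}[n]\to X_{\infty}$ factors through some $X_{s}$ by compactness of $C_{i}[n]$, and by construction of $A_{s}$ its composite with $X_{s}\to X_{s+1}$ vanishes; hence $\cu(C_{i}[n],X_{\infty})=0$ for all $i$ and all $n\ge 0$. Since the class of objects $Y\in\obj\cu$ satisfying $\cu(C_{i}[n],Y)=0$ for all such $i,n$ is closed under extensions, $[+1]$, and small coproducts, its right orthogonal contains $\cu_{t\ge 0}$, which shows $X_{\infty}\in\cu_{t\le -1}$. This yields the desired truncation triangle.

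Uniqueness is routine: any $t$-structure is determined by its non-negative aisle, and the axioms force this aisle to be closed under $[+1]$, extensions, and coproducts, so the stated aisle is contained in the aisle of any such $t$-structure; conversely, once the truncation triangles above exist, they force the non-negative aisle to equal the subclass we started with. The main obstacle — really the only non-formal step — is the truncation construction in the preceding paragraph; all the subtlety is packed into the use of compactness to push morphisms from $C_{i}[n]$ into the homotopy colimit down to a finite stage, which is exactly what makes the small object argument terminate in $\omega$ steps rather than requiring larger ordinals.
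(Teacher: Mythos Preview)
The paper does not actually prove this proposition; it simply cites Theorem~A.1 of \cite{talosa}, and your sketch is exactly the small-object argument given there, so the approaches coincide.

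One wording slip worth flagging: in the step showing $X_\infty\in\cu_{t\le -1}$, the sentence about ``the class of objects $Y$ satisfying $\cu(C_i[n],Y)=0$ \dots\ its right orthogonal contains $\cu_{t\ge 0}$'' is garbled (that class is not closed under $[+1]$, and taking its \emph{right} orthogonal is not what you want). The clean formulation is: the class $\{A:\cu(A,X_\infty)=0\}$ is closed under extensions and small coproducts and contains every $C_i[n]$ with $n\ge 0$; since $\cu_{t\ge 0}$ equals the closure of $\{C_i[n]:n\ge 0\}$ under just extensions and coproducts (this closure is automatically $[+1]$-stable because the generating set is), it follows that $\cu_{t\ge 0}\subset{}^{\perp}X_\infty$, i.e.\ $X_\infty\in\cu_{t\le -1}$. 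This is clearly what you intended, so there is no real gap.
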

\begin{proof} This is Theorem A.1 of \cite{talosa}.

\end{proof}
\begin{rema}\label{rtgen}
1. Recall that $E\in \obj \cu$ determines its $t$-decomposition triangle $E_{t\ge 0}\to E \to E_{t\le -1}$ (with $E_{t\ge 0}\in \cu_{t\ge 0}$ and $E_{t\le -1}\in \cu_{t\le -1}=\cu_{t\le 0}[-1]$) in a functorial way.

Moreover, for the $t$-structure given by Proposition \ref{paisle} the corresponding functors $-_{t\ge 0}$ and  $-_{t\le -1}$ respects $\cu$-coproducts; see Proposition A.2 of ibid.

2. Under the assumptions of the proposition we will say that the $t$-structure $t$ is generated by $C$.

3. If $C$ is suspension-stable (i.e., $\{C_i\}[1]=\{C_i\}$) then the classes $\cu_{t\ge 0}$ and $\cu_{t\le 0}$ are suspension-stable as well. 
Thus  $\cu_{t\ge 0}$ is the class of objects of the localizing subcategory $\du$ generated by $\{C_i\}$, and $E\mapsto E_{t\ge 0}$ yields the right 
 adjoint to the embedding $\du\to \cu$. This functor is clearly exact; this setting is called the {\it Bousfield localization} one (in \cite{neebook}). 
\end{rema}

 We also recall some basics on "localizing coefficients" in a triangulated category.

Below 
$S\subset \z$ will always be a set of prime numbers; the ring  $\z[S\ob]$ will be denoted by $\lam$.
We will 
often assume that $S$ contains $p$ whenever $p$ is the characteristic of our base field $k$ 
 and $p>0$. 

\begin{pr}\label{plocoeff}

 Assume that $\cu$ is compactly generated by a small subcategory $\cu'$.
Denote
 by $\cu_{S-tors}$ the localizing subcategory of $\cu$ 
(compactly) generated by $\co(c'\stackrel{\times s}{\to} c')$ for $c'\in \obj \cu',\ s\in S$.


Then the following statements are valid.

\begin{enumerate}
\item\label{icg1}  $\cu_{S-tors}$ 
also contains all  cones of $c\stackrel{\times s}{\to} c$ for $c\in \obj \cu$ and $ s\in S$.



\item\label{icg2} The Verdier quotient category $\cu_\lam
=\cu/\cu_{S-tors}$ exists (i.e., the morphism groups of the localization are sets);
the localization
functor $l\colon\cu\to \cu_\lam$ respects all coproducts and converts
compact objects into compact ones.  Moreover, $\cu_\lam$ 
is generated by $l(\obj \cu')$ as a localizing subcategory. 

\item\label{icg3} For any $c\in \obj \cu$, $c'\in \obj \cu'$, we have $\cu_\lam(l(c'),l(c))\cong \cu(c',c)\otimes_\z \lam$.


\item\label{icg5} $l$ possesses a right adjoint $G$ that is a full embedding functor. 
The essential image of $G$  consists  of those $M\in \obj \cu$ 
 such that $s\cdot\id_M$ is an automorphism for any $s\in S$ (i.e, $G(\cu)$ is essentially the maximal full $\lam$-linear subcategory of $\cu$).

\item\label{icgdi} Assume that $\du$ is also a compactly generated  (triangulated) category; define $\du_\lam$, $l_{\du}$ and $G_{\du}$ as the $\du$-versions of $\cu_{\lam}$, $l$, and $G$, respectively. Then  any functor $F:\cu\to \du$ that respects coproducts can be 
 canonically completed to a diagram
$$\begin{CD}
 \cu@>{l}>>\cu_{\lam} @>{G}>>\cu\\
@VV{F}V@VV{F_{\lam}}V@VV{F}V \\
\du@>{l_{\du}}>>\du_{\lam} @>{G_{\du}}>>\du
\end{CD}$$
where $F_{\lam}$ is a certain exact functor respecting coproducts. 

\end{enumerate}
\end{pr}
\begin{proof}
See Proposition 5.6.2(I) of \cite{bpgws} (cf. also Proposition A.2.8 and Corollary A.2.13 of \cite{kellyth} along with  Appendix B of \cite{levconv}).
\end{proof}

\begin{rema}\label{rnot} 
For $S=\{l\}$ (i.e., consisting of a single prime) we will write  $\cu[l\ob]$ instead of $\cu_{\z[\frac 1 l]}$. 

Moreover, for a triangulated category $\cu$ that is a value of a $2$-functor $\du$ from $\pfi$ (i.e., if $\cu=\du(F)$ for some perfect field $F$)   its $\lam$-linear version will be denoted by $\du_{\lam}(F)$ instead of $\du(F)_{\lam}$. 
\end{rema}

\subsection{On motivic categories and continuity}\label{sshcont}

Now we recall some basics properties of  triangulated motivic categories (that were defined by Voevodsky and Morel). For our purposes it will be sufficient to consider them over perfect fields only; yet note that a much more general theory is currently available (thanks to the works of Ayoub, Cisinski, and D\'eglise). Respectively, instead of morphisms of base schemes we will consider morphisms of fields. 
 The tensor product operations on 
 our categories will be denoted by $\otimes$.


\begin{pr}\label{pcont}
\begin{enumerate}
\item\label{ishdm1} 
There exist covariant $2$-functors $k\mapsto \shtk$ and $k\mapsto \dmk$ 
 (see \S4.2 of \cite{degmod} for the latter) from $\pfi$ into the $2$-category of tensor triangulated categories.

The categories $\shtk$ and $\dmk$ are closed with respect to arbitrary small coproducts and the tensor products for them respect coproducts (when one of the arguments is fixed). 
Moreover, for  a morphism $m:k\to k'$ the functors $\msh$ and $\mdm$ also respect all coproducts and the compactness of objects.

\item\label{ishdmsinf} There exist  functors $\sv\to \shtk:\ X\mapsto \sinft (X_+)$ (one may consider this as a notation) and $\mg:\sv\to \dmk$; they factor through the corresponding subcategories of compact objects $\shtck$ and $\dmck$, respectively.
Moreover, these two functors 
 convert the products in $\sv$ into the tensor products in $\shtk$ and $\dmk$, respectively,   and  convert the projection $\afo\to \pt$ into isomorphisms.

\item\label{ishdmmg}
For any  $k$ there is an exact tensor functor $M_{k}:\shtk\to \dmk$ (the {\it motivization} functor) that respects coproducts and the compactness of objects; we have $\mk(\sinft(X_+))\cong \mg(X)$ for any $X\in \sv$. 
Moreover, for  any $\pfi$-morphism $m:k\to k'$ the diagram
$$\begin{CD}
\sv(k) @>{\sinft(-_+)}>> \shtk @>{M_k}>>\dmk\\
@VV{X\mapsto X_{k'}}V @VV{\sht(m)}V@VV{\dm(m)}V \\
\sv(k') @>{\sinft{}_{k'}(-_+)}>> \shtkp@>{M_{k'}}>>\dmkp
\end{CD}$$ 
 is commutative. 

\item\label{ishdmadj} $M_k$ possesses a right adjoint $U_k$ that respects coproducts. Furthermore, the functor $U_k\circ M_k$ is isomorphic to $-\otimes \emz$ for a certain object $\emz$ of $ \shtk$. 

\item\label{ishdmun} 
The objects $S^0=\sinft(\pt_+)$ and $\z=\mg(\pt)$ (we omit $k$ in this notation) are tensor units of the corresponding motivic categories, 
and we have $\dmk(\z,\z)\cong \z$. 


\item\label{ishdmtw} 
Denote by $T$ the complement to $\sinft(\pt_+)$ in $\sinft(\p^1_+)$ (with respect to the natural splitting), and denote by $\z(1)[2]$ the complement to $\mg(\pt)$ in $\mg(\p^1)$.
Then these objects are $\otimes$-invertible in the corresponding categories, and $M_k(T)\cong \z(1)[2]$.

 The $i$th iterates of the functors $-\otimes (T[-1])$ and $-\otimes (\z(1)[1])$ will (abusively) be denoted by $-\{ i\}$ for all $i\in \z$. 
\footnote{Note here that  the "usual" Tate twist $-(1)$ (in the convention introduced by Voevodsky) clearly equals $-\{1\}\circ [-1]$.}

\item\label{ishdmcomp} The category $\shtk$ (resp. $\dmk$) is compactly generated (see Definition \ref{dcomp}) by the objects $\sinft(X_+)\{ i \}$ (resp. $\mg(X)\{ i \}$) for $X\in \sv, i\in \z$.

\item\label{ishdmeps}  For any $k$ there exists a canonical idempotent $\shtk$-endomorphism $\epsilon$ of $S^0$ (see \S6.1 of \cite{morintao}) such that  $M_k(\epsilon)=-\id_{\z}$. 

 \item\label{ishdmcont} The 
$2$-functors $\shtc(-)$ and  $\dmc(-)$  are continuous in the sense described  
 in \S\ref{snotata} 
(i.e.,   
   $\shtc(k)\cong \inli_i \shtc(k_i)$ and $\dmc(k)\cong \inli_i \dmc(k_i)$ whenever  $k=\inli k_i$).  


\end{enumerate}
\end{pr}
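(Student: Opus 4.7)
The plan is to assemble these nine facts from the standard constructions of Morel--Voevodsky's $\shtk$ and Voevodsky's $\dmk$ (in the $2$-functorial version of Cisinski--D\'eglise, \cite{degmod}). Parts \ref{ishdm1}--\ref{ishdmsinf}, \ref{ishdmun}, \ref{ishdmtw} and \ref{ishdmcomp} are built into the formalism: $\sinft(X_+)$ and $\mg(X)$ are produced by the standard suspension procedures, coproduct-preservation and compact generation by the twisted suspension spectra (resp.\ motives) of smooth varieties are consequences of the model-category presentations, $T$ and $\z(1)[2]$ are $\otimes$-invertible by design of the stable motivic categories, and $\dmk(\z,\z)\cong \z$ is Voevodsky's computation of motivic cohomology of the point in bidegree zero. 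For parts \ref{ishdmmg}--\ref{ishdmadj} I would invoke that $M_k$ is the left adjoint of a Quillen pair between symmetric monoidal model categories and so is the unique monoidal coproduct-preserving exact functor sending $\sinft(X_+)$ to $\mg(X)$; the commutative square of \ref{ishdmmg} is then just naturality of this Quillen adjunction in the base field. Since $M_k$ takes compact generators to compact generators, it preserves compacts, hence its right adjoint $U_k$ preserves coproducts; the isomorphism $U_k\circ M_k\cong -\otimes \emz$ with $\emz:=U_k(\z)$ follows from the projection formula. Part \ref{ishdmeps} is Morel's construction: $\epsilon$ is built from the swap on $T\wedge T$, and under $M_k$ becomes the swap on $\z(1)[2]^{\otimes 2}$, which equals $-\id$ by the sign in the Koszul commutativity constraint on $\dmk$.

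The substantive content is part \ref{ishdmcont}, the continuity of $\shtc(-)$ and $\dmc(-)$. My strategy is to reduce the equivalence $\inli_i \shtc(k_i)\xrightarrow{\sim}\shtc(k)$ (for $k=\inli k_i$) to a hom-continuity statement together with a descent-of-generators statement. For the latter, every $X\in \sv(k)$ descends to some $k_i$ by Grothendieck's spreading-out of finitely presented schemes, and morphisms between smooth $k$-varieties descend as well; combined with compact generation (part \ref{ishdmcomp}) and Remark \ref{rcomp}, this shows that every compact object of $\shtc(k)$ is a retract of a finite iterated extension of twisted suspension spectra already defined over some $k_i$. For the former I need
$\shtk(\sinft(X_+)\{a\}[j],\sinft(Y_+)\{b\}[j'])\cong\inli_i \shtc(k_i)(\sinft((X_i)_+)\{a\}[j],\sinft((Y_i)_+)\{b\}[j'])$
for $X,Y\in\sv(k_0)$ with descents $X_i,Y_i$, and an analogous statement for $\dmc$.

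The main obstacle is the hom-continuity above, which I would derive by specialising the general continuity theorem of \S4.3 of \cite{cd}; it applies uniformly to $\sht$ and $\dm$ as instances of the six-functor formalism, so that in the $\dm$ case the additional input --- continuity of finite correspondences on $X\times Y$ under filtered base change of the ground field --- is handled in loc.\ cit.\ as well. Once hom-continuity is established, a formal filtered-colimit argument for idempotent-complete triangulated categories converts the descent of compact generators into the desired equivalence: the comparison functor $\inli_i \shtc(k_i)\to \shtc(k)$ is fully faithful by hom-continuity and essentially surjective on compact generators, hence (by idempotent completeness) essentially surjective on all compact objects. The same argument works verbatim for $\dmc$.
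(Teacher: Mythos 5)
Your proposal is correct and takes essentially the same route as the paper: the paper simply declares all nine parts "rather well-known," citing Example 2.6(1) of \cite{cdint} for the continuity of $\shtc(-)$ and $\dmc(-)$ and \S6.1, Remark 6.3.5 of \cite{morintao} for the endomorphism $\epsilon$. Your citation of \S4.3 of \cite{cd} for the abstract continuity machinery is the general framework of which the paper's reference \cite{cdint} is the specific instance, and your spreading-out plus hom-continuity plus idempotent-completion argument is exactly the proof found there. The only minor looseness is in part \ref{ishdmadj}: invoking "the projection formula" for $U_k\circ M_k\cong -\otimes\emz$ is slightly imprecise (the projection map $U(\mathbf 1)\otimes N\to U(M_k(N))$ need not be an isomorphism for a general monoidal adjunction); the clean justification, and the one the paper implicitly relies on (see its \S\ref{scobormod} and Proposition 38 of \cite{roe}), is that $\dmk$ is equivalent to the homotopy category of $\emz$-modules with $M_k$ the free-module functor and $U_k$ the forgetful one, whence the formula is tautological. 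This does not affect correctness.
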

\begin{proof}
All of these assertions are rather well-known except possibly the first part 
 of the last one, that can be found in  Example 2.6(1) of \cite{cdint} (see also \S6.1 and Remark  6.3.5 of \cite{morintao} 
 for assertion \ref{ishdmeps}). 
\end{proof}

Now we introduce the $\lam$-linear versions of our motivic triangulated categories. 
As we will briefly explain, these categories are easily seen to fulfil the natural analogues of the properties listed in Proposition \ref{pcont}.  
For the convenience of the readers we note that the following 
 proposition is not necessary for the understanding of \S\ref{seasy}. 

\begin{pr}\label{prcont}
 Choose a set of primes $S$, set $\lam=\z[S\ob]$, and consider the 
$2$-functors $\shtlam$ and $\dmlam$ from $\pfi$ into the $2$-category of tensor triangulated categories (see Proposition \ref{plocoeff}(\ref{icg2},\ref{icgdi}) and the convention introduced in Remark \ref{rnot}).

Then the following statements are valid.

\begin{enumerate}
\item\label{iel1}
 The functors of the type $\shlam(m)$  and $\dmlam(m)$ (where $m$ is a morphism of perfect fields) respect the  compactness of objects and all coproducts. 

Moreover, the tensor products in these categories respect coproducts (when one of  the arguments is fixed).

\item\label{iel2}
The natural $\lam$-linear versions 
 of Proposition \ref{pcont}(\ref{ishdmsinf}--\ref{ishdmcont}) 
are also valid.

\item\label{iel3}
The functors  $F\mapsto \ns\subset \obj \dmlamc(F)$ and $F\mapsto \{\mglam(\sv(F))\{r\}[j]\}$ (for $F$ being a perfect field and any fixed $r,j\in\z$) are 
 $\dmlamc$-continuous.
\end{enumerate}

\end{pr}
\begin{proof}
These statements 
 mostly easily follow from Proposition \ref{pcont} combined with Proposition \ref{plocoeff}. However, one should also invoke Remark \ref{rcomp} to obtain that the functors in question respect the compactness of objects 
 along with the $\lam$-linear version of Proposition \ref{pcont}(\ref{ishdmcont}) and assertion \ref{iel3}.  

\end{proof}


\begin{rema}\label{rcont}

We will now discuss some more notation and properties for the $2$-functors $\shlamc$ and $\dmlamc$; certainly, they can  be applied in the case $\lam=\z$ (i.e., for $S=\emptyset$).

1. The restriction of $\mklam$ to the subcategory $\shlamc(k)$ of compact objects (with its image being the corresponding $\dmlamc(k)$) will be denoted by $\mklam^c$.

2. We will need a certain property of continuity for families of subsets of $\obj\dmlamc(-)$. 
To avoid  (minor) set-theoretical difficulties, 
till the end of the section will assume that  $\dmlamc(F)$ is a small category for any perfect field $F$.
This technical assumption is easily seen 
not to affect the results below (and we may actually adopt it in the rest of this paper as well).

So, 
let $O$ be a subfunctor of the functor $\obj \dmlamc$ from $\pfi$ to the category of sets
(i.e., $O(F)\subset \obj \dmlamc(F)$ for all perfect $F$, and $O(m)$ for a morphism $m:k\to k'$ of perfect fields is given by the restriction of $\dmlam(m)$ to $O(k)$). Then we will say that $O$ is {\it $\dmlamc$-continuous} if it satisfies the following condition:
 for  $k=\inli k_i$ 
and any $M\in  O(k)$ 
there exists some $k_0\in \{k_i\}$ and $M^0\in O(k_0)$ such that $M\cong M^0_{k}$ (i.e.,  $M\cong O(m_0)(M^0)$  for the corresponding $m_0:k_0\to k$; see  \S\ref{snotata}). Note that latter condition is clearly equivalent to the set of $\dmlamc(k)$-isomorphism classes in $O(k)$ 
 to be the direct limit  of the sets of  $\dmlamc(k_i)$ -isomorphism classes in   $O(k_i)$.

3. Below we will apply the following consequence of continuity: for any $\dmlamc$-continuous $O$ the property that for some $E\in \obj \shlamc(k)$, the object $M^c_{k,\lam}(E)\in \obj \shlamc(k)$  belongs to $O(k)$ is "continuous" as well. This means the following: 
 if $k=\inli k_i$,  $E\in \obj\shlamc(k)$, and $\mklam(E)\in O(k)$, then there exists some $k_j\in \{k_i\}$ along with   $E^j\in \obj\shlamc(k_j)$ such that $M_{k_j,\lam}(E^j)\in O(k_j)$ and 
$E^j_k\cong E$ (see \S\ref{snotata}). 

Indeed, 
 the continuity property for $\shlamc(-)$  allows us to choose some $k_0\in \{k_i\}$ such that $E$ is defined over it (i.e., such that there exists $E^0\in \obj \shlamc(k_0)$ with
$E\cong 
E^0_k$).  Next, the $\dmlamc$-continuity of $O$ gives the existence of $k_1\in \{k_i\}$  and $M^1\in O(k_1)$ such that $
M^1_k \cong \mklam(E)$. Furthermore, the 
 continuity property for morphisms in $\dmlamc(-)$ (see \S\ref{snotata})  gives the existence of $k_2\in  \{k_i\}$ that contains both $k_0$ and $k_1$ such that 
$(M_{k_0,\lam}(E^0)) _{k_2}\cong M^1_{k_2}$. Thus we can take  
$k_j=k_2$, $E^j=E^0_{k_2}$ (since  $M_{k_2,\lam}(E^0_{k_2})\cong M^1_{k_2}\in (O(k_1))_{k_2}\subset O(k_2)$).

4. Now we describe some "tools" for constructing $\dmlamc$-continuous functors; we will apply them along with Proposition \ref{prcont}(\ref{iel3}).

Firstly, 
the functors  $F\mapsto \ns\subset \obj \dmlamc(F)$ and $F\mapsto \{\mglam(\sv(F))\{r\}[j]\}$ (for $F$ being a perfect field and any fixed $r,j\in\z$) are obviously $\dmlamc$-continuous.

Next, the "union" of any set of continuous functors is easily seen to be continuous.

Lastly, if $O$ is $\dmlamc$-continuous then the functor sending $F$ into 
the 
 envelope of $O(F)$ (in $\dmlamc(F)$) is $\dmlamc$-continuous as well (recall that we assume $\dmlamc(F)$ to be small). 


\end{rema}

\subsection{On  cohomological dimensions and Grothendieck-Witt rings}\label{switt}

As we have said in \S\ref{snotata}, $L$ always denotes some  (not necessarily perfect)  characteristic $p$ field. 
We recall the following well-known facts.

\begin{pr}\label{pcohdim}
Let $L$ be a finitely generated field (i.e., $L$ is finitely generated over its prime subfield). 
Then the following statements are valid.


1. If $L$ is  non-orderable then its  cohomological dimension  (at any prime) 
is finite.

2. 
The cohomological dimension of $L$ (whether it is finite or not)  equals the  one of the perfect closure $L^{perf}$ of $L$.

\end{pr}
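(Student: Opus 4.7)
The plan is to treat the two parts separately, reducing each to well-known facts in Galois cohomology (most of which can be found in Serre's \emph{Cohomologie Galoisienne}, Ch.~II).

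Part 2 is the easier one. If $p = 0$ then $L = L^{perf}$ and there is nothing to prove; if $p > 0$ then $L^{perf}/L$ is purely inseparable algebraic, so every finite separable extension of $L$ lifts uniquely to a finite separable extension of $L^{perf}$ and vice versa. Hence the absolute Galois groups $G_L$ and $G_{L^{perf}}$ coincide as profinite groups, and so the two fields share the same cohomological $\ell$-dimension for every prime $\ell$. (Equivalently, this is the topological invariance of the small \'etale site under purely inseparable morphisms.)

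For part 1, I would write $L$ as a finite extension of a purely transcendental extension $k_0(x_1,\dots,x_n)$ of its prime subfield $k_0$, where $n$ is the transcendence degree. Two classical bounds (Serre, loc.~cit., II.4)---namely that $\operatorname{cd}_\ell$ is non-increasing under passage to a finite extension, and that $\operatorname{cd}_\ell(F(t)) \le \operatorname{cd}_\ell(F) + 1$ whenever $\ell \neq \operatorname{char}(F)$---combine by induction on $n$ to yield $\operatorname{cd}_\ell(L) \le \operatorname{cd}_\ell(k_0) + n$ for every $\ell \neq p$. Since every prime field has $\ell$-cohomological dimension at most $2$ for every prime $\ell$, this is finite. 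At $\ell = p > 0$ the Artin--Schreier exact sequence directly gives $\operatorname{cd}_p(L) \le 1$ without any further hypothesis.

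The only case where the non-orderability assumption genuinely enters is $p = 0$, $\ell = 2$, since $\operatorname{cd}_2(\mathbb{Q}) = \infty$. Here I would apply the inductive bound above instead to the finitely generated extension $L(i)/\mathbb{Q}(i)$: because $\mathbb{Q}(i)$ is non-orderable with $\operatorname{cd}_2 = 2$, this yields $\operatorname{cd}_2(L(i)) \le n + 2$. By Artin--Schreier, elements of order $2$ in $G_L$ correspond bijectively to orderings of $L$, so non-orderability of $L$ is precisely the assertion that $G_L$ contains no $2$-torsion. Serre's theorem on the invariance of $\ell$-cohomological dimension of a $\ell$-torsion-free profinite group under passage to finite-index subgroups (loc.~cit., Ch.~I) then gives $\operatorname{cd}_2(L) = \operatorname{cd}_2(L(i)) < \infty$, completing the argument. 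I expect this last step---Serre's rigidity theorem applied exactly at the obstruction---to be the only mildly subtle point; everything else is essentially a chain of citations.
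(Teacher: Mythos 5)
Your argument is correct and takes essentially the same route as the paper, which for Part 1 simply points to Serre (\emph{Galois Cohomology}, \S II.3.3 and II.4.2) and for Part 2 makes exactly your observation that $G_L = G_{L^{perf}}$ since $L^{perf}/L$ is purely inseparable. The only caveat is a momentary lapse in your exposition: the sentence ``every prime field has $\ell$-cohomological dimension at most $2$ for every prime $\ell$'' is false (as you yourself note, $\operatorname{cd}_2(\mathbb{Q})=\infty$), so it should be qualified from the start to exclude the case $\ell=2$, $\operatorname{char}=0$ — the case you then treat separately via $L(i)$ and Serre's theorem on open subgroups of $\ell$-torsion-free profinite groups.
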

\begin{proof}
1. See \cite{serregalois}, \S  II.3.3 and  II.4.2.

2. It suffices to note that the absolute Galois group of $L$ equals the one of its perfect closure.


\end{proof}

The following easy lemma follows immediately.

\begin{coro}\label{cohdim}
If $k$ is non-orderable then it may be presented as a filtered direct limit of perfect fields of finite cohomological dimension. 

\end{coro}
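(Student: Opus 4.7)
The plan is to realize $k$ as the filtered colimit of the perfect closures (taken inside $k$) of its finitely generated subfields that contain a fixed witness to the non-orderability.

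First, since $k$ is non-orderable, fix an expression $-1 = a_1^2 + a_2^2 + \ldots + a_n^2$ with $a_1,\ldots,a_n \in k$. Let $\mathcal{F}$ denote the collection of finitely generated subfields $L \subset k$ containing $\{a_1,\ldots,a_n\}$, ordered by inclusion; this is a filtered poset since the compositum of two finitely generated subfields containing the $a_i$ is again such a subfield. For each $L \in \mathcal{F}$, let $L^{perf} \subset k$ denote its perfect closure inside $k$ (which makes sense because $k$ itself is perfect, hence contains all the $p^n$-th roots of elements of $L$). The collection $\{L^{perf} : L \in \mathcal{F}\}$ is then a filtered system of perfect subfields of $k$.

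Next, I would verify the two properties. Every element $x \in k$ lies in some finitely generated subfield, and then it lies in the compositum with $k_0 := \mathbb{F}_p(a_1,\ldots,a_n)$ (or $\mathbb{Q}(a_1,\ldots,a_n)$ if $p = 0$), which belongs to $\mathcal{F}$. Hence $x \in L^{perf}$ for this $L$, and so $k = \inli_{L \in \mathcal{F}} L^{perf}$. Moreover, each $L \in \mathcal{F}$ contains $-1 = a_1^2 + \ldots + a_n^2$, so $L$ is itself non-orderable; being finitely generated, Proposition \ref{pcohdim}(1) gives that $L$ has finite cohomological dimension, and Proposition \ref{pcohdim}(2) transfers this property to $L^{perf}$.

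There is no serious obstacle: the only technical point to be careful about is that we want the $L^{perf}$ to form a \emph{filtered} system of actual subfields of $k$, which is why we take perfect closures inside $k$ rather than abstractly. The argument is then a direct application of Proposition \ref{pcohdim}.
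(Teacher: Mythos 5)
Your argument is correct and follows essentially the same route as the paper: present $k$ as the filtered colimit of perfect closures of finitely generated subfields and invoke Proposition \ref{pcohdim}. Your extra care in restricting to subfields that contain the fixed witnesses $a_1,\dots,a_n$ to $-1$ being a sum of squares is a worthwhile explicit detail (it is needed so that Proposition \ref{pcohdim}(1) applies to each term of the system), but it does not change the substance of the argument, since such subfields are cofinal among all finitely generated subfields.
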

\begin{proof} 
It suffices to present $k$ (recall that we assume it to be perfect) as the direct limit of the perfect closures of its finitely generated subfields, and apply the previous proposition.

\end{proof}

\begin{rema}\label{rvcd}
1. Note however that below (everywhere except in \S\ref{salt}) it will actually be sufficient to present $k$ as the direct limit of fields of finite cohomological $2$-dimension.

2. Recall that the  virtual cohomological $2$-dimension of a field $L$ of characteristic $\neq 2$ may be defined as the  cohomological $2$-dimension of $L[\sqrt{-1}]$.
Thus any finitely generated field of characteristic $\neq 2$ is of finite virtual  cohomological $2$-dimension.

\end{rema}

Now we recall some basics on Grothendieck-Witt rings and their relation to $\sht(-)$.

\begin{rema}\label{rwitt}
1. As shown in \S6.3 of \cite{morintao} (see Theorem 6.3.3 and Lemma 6.3.8 of ibid.), $\shtk(S^0,S^0)\cong  GW(k)$ (the  Grothendieck-Witt ring of $k$). If $p\neq 2$ then the latter is the  Grothendieck group of  non-degenerate $k$-quadratic forms. It
is isomorphic to the kernel of $W(k)\bigoplus \z\to \z/2\z$, where $W(k)$ is the Witt ring of (quadratic forms over $k$) and the projection $W(k)\to \z/2\z$ is given by the parity of the dimension of quadratic forms. In the case  $p=2$ one should consider symmetric bilinear forms instead of quadratic ones here.

 As mentioned in the beginning of \S2 of  \cite{arael}, if $p\neq 2$ then the group $W(k)$ is an extension of the free abelian group whose generators correspond to orderings on $k$ by a torsion group. Thus the kernel of $M_{k*}:\shtk(S^0,S^0)\to \dmk(\z,\z)$ is torsion if and only if $k$ is non-orderable (at least, in the case $p\neq 2$; note that below we will apply this statement in the case $p=0$ only).

2. It is no wonder that structural results on Witt rings of fields play a very important role in motivic homotopy theory. In particular, they were crucial for \cite{bachcons}, \cite{levconv}, and \cite{asok}. Information of this sort was also actively used in the previous version of the current paper; yet the corresponding arguments were essentially incorporated in the current version of \cite{bachcons}  (resulting in Lemma 19 of ibid.). 

\end{rema}

\section{
Main conservativity of motivization results 
}\label{sextres}

The main result of this section is that (the "compact version" (b) of) Theorem 16 of \cite{bachcons} 
 can be extended to the case when $k$ is an arbitrary non-orderable field. 
 Moreover, the restriction of  $\mk^c$ to $2$-torsion objects is conservative for any $k$. 

So, in \S\ref{seasy} we prove the "triangulated parts" of these results. We deduce them from 
similar results of ibid. (where certain cohomological dimension finiteness was assumed) using a simple continuity argument (that is a particular case of the reasoning described in Remark \ref{rcont}(3)). 
We also note that the conservativity of $\mk^c$ never extends to 
"the whole" $\mk$; 
 moreover, $\mk^c$ is never conservative if $k$ is not non-orderable (i.e., if it is formally real).

In \S\ref{sthom} we  study the homotopy $t$-structures 
and  (Voevodsky's) slice filtrations  for 
 $\shtlam(-)$ and $\dmlam(-)$ (for a coefficient ring $\lam\subset \q$);  their properties follow from their 
 well-known $\z$-linear versions. 

In \S\ref{sdiff}  we prove the $\lam$-linear version of (the stronger part of)  
Bachmann's theorem  over an arbitrary non-orderable $k$, stating that  the $m$-homotopy connectivity of $M_{k,\lam}(E)$ for $E\in \obj \shlamc(k)$ ensures the $m$-homotopy connectivity (with respect to the homotopy $t$-structure $\tshlam$) of $E$  itself. We also give the following immediate applications of our results (for $k$ being any non-orderable perfect field): we prove the corresponding generalization of Theorem 2.2.1 of  \cite{asok}, and prove that Theorem 33 of \cite{bachinv} (on the $\otimes$-invertibility of certain motives of affine quadrics) may be carried over to motivic spectra.

\subsection{On "$\z$-linear  triangulated conservativity"} 
 \label{seasy}


Now we prove the weaker versions of our conservativity results. 

\begin{theo}\label{peasy}
I. Assume that $k$ is a non-orderable field. Then the following statements are valid.

\begin{enumerate}
\item\label{ieps}
There exists 
 $N\ge 0$ such that $2^N(1+\epsilon)=0$ in $\sht(k)$ (see Proposition \ref{pcont}(\ref{ishdmeps})) and $2^N\eta=0$, where $\eta$ is the (Morel's) 
  stable algebraic Hopf map $S^0\{1\}\to S^0$.

\item\label{ibacht25} The restriction $M_k^c:\shtck\to \dmck$
 of the motivization functor $M_k$ to  compact objects 
  is conservative.

II. Let $E$ be a $2$-torsion   (see \S\ref{snotata}) object of $\shtck$, where $k$ is an arbitrary perfect field. Then $E=0$ whenever $M_k^c(E)=0$.
\end{enumerate}
\end{theo}

\begin{proof}
I.1. By Lemma 6.7 of \cite{levconv}, the assertion is fulfilled if $p>0$. Thus we can assume $p\neq 2$.
  
Now, $1+\epsilon$ belongs to the image in $\shtk(S^0,S^0)\cong  GW(k)$ of the class $[x^2]-[-x^2]$; 
see Remark \ref{rwitt}(1). 
Hence the first part of the assertion 
 easily follows from 
 Proposition \ref{pcohdim}. The second part of the assertion follows from the first one immediately
by Lemma 6.2.3 of \cite{morintao}. 

2. According to Theorem 16 of \cite{bachcons} (see version (b) of the first part of the theorem), the statement is valid if the  cohomological $2$-dimension of $k$ is finite. 

Next, in the general case 
 Corollary \ref{cohdim} enables us to present $k$ as $\inli k_i$ (recall here the conventions described in \S\ref{snotata}) so that the  cohomological $2$-dimensions of $k_i$ are finite. Thus to finish the proof it suffices to recall that the correspondence $F\mapsto \ns\subset \obj \dmc(F)$ is $\dmc$-continuous (see Remark \ref{rcont}(4)); here we take $\lam=\z$ and apply part 3 of this remark).

Now we explain this continuity argument in our concrete situation (for the sake of those readers that have problems with Remark \ref{rcont}).

Assume that $\mk(E)=0$ for some $E\in \obj \dmck$. 
By the continuity property for $\shtc(-)$ (see Proposition \ref{pcont}(\ref{ishdmcont})) there exists 
$k_0\in \{k_i\}$ such that  $E$ is defined over $k_0$ (i.e., there exists  $E^0\in \obj\shtc(k_0)$ such that 
$E^0_k\cong E$; cf. \S\ref{snotata}).  Next, the continuity property  for the morphisms in $\dmc(-)$ (see \S\ref{snotata})  gives the existence of  $k_1\in \{k_i\}$ such that $k_1$ is an extension of $k_0$ and 
$M_{k_1}(E^0_{k_1})=0$. Hence applying Theorem 16 of \cite{bachcons} to 
$E^0_{k_1}$  we obtain $E^0_{k_1}=0$. Thus the object $E\cong (E^0_{k_1})_k$ is zero as well. 

II. The proof is rather similar to that of assertion I.2. 
Firstly, that assertion enables us to assume that $k$ is   formally real; so, we restrict ourselves to the case $p=0$.

Then  $k=\inli k_i$, where $k_i$ are  finitely generated extensions of $\q$. Similarly to the previous proof, the   continuity property for $\shtc(-)$ gives the existence of 
$k_0\in \{k_i\}$ and $E^0\in \obj\shtc(k_0)$ such that 
$E^0_k\cong E$. Moreover, the continuity property for morphisms in  $\shtc(-)$ enables us to assume that $E_0$ is $2$-torsion. 

Thus it suffices to prove our assertion for $k$ being an orderable finitely generated field. Hence it remains to apply Lemma 19 of ibid. (along with Remark \ref{rvcd}(2)). 
\end{proof}

\begin{rema}\label{rshmi}

Now we give some examples demonstrating that the assumptions of our 
 theorem are necessary. 

1.  
The "whole" 
$M_k$ is not conservative for any (perfect) $k$. 
Indeed,  consider the homotopy colimit $S^0[\eta\ob]$ of the sequence of morphisms  $S^0\stackrel{\eta\{-1\}}{\to}S^0 \{-1\}\stackrel{\eta\{-2\}}{\to}S^0 \{-2\}\to \dots$ (originally considered in Definition 2 of \cite{anlepa}; note yet 
  that the definition of $\eta$ in ibid. differs from our one by $-\{1\}$).
Since $M_k(\eta)=0$,  we have $M_k(S^0[\eta\ob])=0$ (see Lemma 1.6.7 of \cite{neebook}).  On the other hand, Theorem 1 of \cite{anlepa} easily implies that $S^0[\eta\ob]\neq 0$.

Now let us assume that $k$ is non-orderable. Then part I.\ref{ieps} of our 
 theorem easily implies that $S^0[\eta\ob]$ is a $2$-torsion object (cf. part II of the 
theorem). Moreover, we obtain that the kernel of ("the whole") $M_k$ is not generated by 
 the one of $M_k^c$ (as a localizing subcategory of $\sht(k)$) in this case.

2. Furthermore,  $\shtohk$ may be considered as a subcategory of $\shtk$ (see Proposition \ref{plocoeff}(\ref{icg5})).
Next, recall that $\shtohk$  naturally splits as the product of certain triangulated categories $\shtplk$ and $\shtmik$; see the text preceding Lemma 6.7 of \cite{levconv}.  Moreover, the objects of $ \shtmik$ inside $\shtohk$ are characterized by the condition $\eps =-\id$ (see Proposition \ref{pcont}).
Since the functor $M_k$ kills  $1+\eps$ (see part \ref{ishdmeps} of the proposition), we obtain that it annihilates $\shtmik$. 

Now, if $k$ is formally real (i.e., not non-orderable) then the image of $S^{0}(k)$ of $ \shtmik$ is not torsion. Indeed, recall that $\shtplk_\q\cong \dmk_\q$ (see Theorem 16.2.13 of \cite{cd}) 
whereas   $\shtk(S^0,S^0)\otimes \q\not \cong \dmk(\z,\z)\otimes \q$ in this case (see Remark \ref{rwitt}(1)).

On the other hand, $\shtplk=\shtohk$ if $-1$ is a sum of squares in $k$, i.e., if $k$ is unorderable. Indeed, in the case $\cha k>0$ this fact is given by Lemma 6.8 of \cite{levconv}; for $\cha k=0$ this statement can be easily extracted from the proof of Lemma 6.7 of ibid. Yet the author does not know whether the kernel of $M_k$ can consist of torsion objects only in this case  (here the answer may certainly depend on $k$). 


3. If $-1$ is a not a sum of squares in $k$ (i.e., $k$ is formally real) then the kernel of $\mk^c$ is non-zero as well. Indeed, the object $C=\co(2\id_{S^{0}(k)}+\epsilon)$ is clearly compact, and the long exact sequence $\dots \to \shtk(S^0,S^0)\cong GW(k)\stackrel{ \times(2[x^2]-[-x^2])}{\to } \shtk(S^0,C)\to \shtk(S^0,S^0[1])=\ns$ (see Remark \ref{rwitt}(1)) easily implies that $C\neq 0$ (since considering the split surjection of $GW(k)$ to $ \z$ corresponding to any ordering on $k$ one obtains  $\shtk(S^0,C)\supset \z/3\z$). Yet $M_k(C)=0$ since 
$M_k(2\id_{S^{0}(k)}+\epsilon)=\id_\z$.


4. Clearly, for any $E\in \obj \shtc(k)$ a cone $E/2$ of  the morphism $E\stackrel{2\id_E}{\to} E$ is a $2$-torsion object (that is surely annihilated by $4$). Thus part II of the proposition above implies that  $M_{k}(E)$ can vanish only if (the endomorphism ring of) $E$ is uniquely $2$-divisible. So it seems reasonable to conjecture that $M^c_{k}(E)$  vanishes  
  only if $E$ is an odd torsion object.  

On the other hand, the odd torsion in the kernel of $M_{k}^c$ may be quite "large" if $k$ is formally real. In particular,  $M_{k}$ kills $C\otimes \obj \sht(k)$, where $C$ is the object constructed 
 above. 
Note that one can also easily construct $l$-torsion objects "similar to $C$" for $l$ being any odd integer.

More generally,  note that 
the elements of the kernel of $M_{k,\lam}^c$ are  uniquely $2$-divisible (i.e., are $\zoh$-linear) for any choice of $S$ (and so, of $\lam$) by Corollary \ref{ceasylam} below. We conjecture that this kernel consists of odd torsion elements only whenever $2\notin S$.

5. 
Theorem  \ref{peasy}(I.1) is certainly not quite new; cf. Remark 1.2.8(2) of \cite{degorient}.

\end{rema}

\subsection{More auxiliary results: homotopy $t$-structures,  slice filtrations, and their continuity}\label{sthom} 

As always, $S$ will denote some set of primes, $\lam=\z[S\ob]$. 
Starting from this section we will freely use the notation and results of \S\ref{sshcont}. 

\begin{defi}\label{deff}
1. Denote by $\tshlam$ (resp. $\tdmlam$) the $t$-structure on $\shtlamk$ (resp. on $\dmlamk$) generated by $\sinftlam(X_+)\{i\}$ (resp. by $\mglam(X)\{i\}$) for $X\in \sv$, $i\in\z$ (see Remark \ref{rtgen}(2)). 
We will call these $t$-structures {\it homotopy} ones.

We will say that $E\in \obj\shtlamk$ is {\it homotopy connective} if it belongs to $\shtlamk_{\tshlam\ge i}$ for some $i\in \z$.

2.  Denote by $\shelam(k)$ (resp. $\dmelam(k)$) the localizing subcategory of $\shlamk$ (resp. of $\dmlamk$) generated by  $\sinftlam(X_+)$ (resp. by $\mglam(X)$; so, we follow the convention introduced in Remark \ref{rnot}). 

Obviously, $\shelam(k)\{1\}=\shelam(k)(1)\subset \shelam(k)$ and   $\dmelam(k)\{1\}=\dmelam(k)(1)\subset \dmelam(k)$; we will call the filtration of $\shlamk$ by $\she(k)\{i\}$ (resp. of $\dmk$ by $\dmeb(k)\{i\}$) for $i\in \z$ the {\it slice} filtration. We will say that the elements of $\cap_{i\in \z}\obj\she(k)\{i\}$ 
and of $\cap_{i\in \z}\obj \dmeb(k)\{i\}$ are {\it infinitely effective}. 

We will say that $E\in \obj\shtlamk$ is {\it slice-connective}  if it belongs to $\obj\shelam\{i\}$ for some $i\in \z$.
\end{defi}

We will omit $\lam$ in this notation if $\lam=\z$. 

\begin{rema}\label{reff}
1. For any $X\in \sv$ we have $\sinftlam(X_+)\in \obj \shelam(k)\cap \shlam(k)_{\tshlam\ge 0}$ and $\mglam(X)\in \obj \dmelam(k)\cap \dmlam(k)_{\tdmlam\ge 0}$. Hence for any compact object $E$ of $\shlam(k)$ (resp. of $\dmlam(k)$) there exists $r\in \z$ such that  $E$ belongs to $\obj \shelam(k)\{r\}\cap \shlam(k)_{\tshlam\ge r}$ (resp. to $\obj \dmelam(k)\{r\}\cap \dmlam(k)_{\tdmlam\ge r}$); here we apply Remark \ref{rcomp}.

2. In \cite{bachcons} the objects that we call homotopy connective were said to be just connective.
\end{rema}

Now let us establish some more basic properties of these filtrations (and recall that the category $\shtck[\frac 1 p]$ is rigid).

\begin{pr}\label{peff}
Let $r\in \z$, $m: k\to k'$ is an embedding of perfect fields. 
Then the following statements are valid.
\begin{enumerate}
\item\label{ieff1} $\mshlam$ sends $\shelam(k)\{r\}$ into $\shelam(k')\{r\}$ and maps  $\shlamk_{\tshlam\ge r}$ into $\shlam(k')_{\tshlam\ge r}$.
\item\label{ieff2} $\mdmlam$ sends $\dmelam(k)\{r\}$ into $\dmelam(k')\{r\}$ and maps  $\dmlam(k)_{\tdmlam\ge r}$ into $\dmlam(k')_{\tdmlam\ge r}$.
\item\label{imk} 
 $\mklam$ sends $\shelam(k)\{r\}$ into $\dmelam(k)\{r\}$ and maps $\shlamk_{\tshlam\ge r}$ into $\dmlam(k)_{\tdmlam\ge r}$.

\item\label{iefftens} $\obj\shelam(k)\{r\}\otimes \obj\shelam(k)\subset \obj\shelam(k)\{r\}$ and $\shlam(k)_{\tshlam\ge r}\otimes \shlam(k)_{\tshlam\ge 0}\subset \shlam(k)_{\tshlam\ge r}$; $\obj\dmelam(k)\{r\} \otimes \obj \dmelam(k)\subset \obj \dmelam(k)\{r\} $ and
 $\dmlam(k)_{\tdmlam\ge r}\otimes \dmlam(k)_{\tdmlam\ge 0} \subset \dmlam(k)_{\tdmlam\ge r}$.

\item\label{ieff4} The correspondences $F\mapsto \obj\dmelam(F)\{r\}\cap \obj \dmlamc(F)$  and $F\mapsto \dmlam(F)_{\tdmlam\ge r}\cap \obj \dmlamc(F)$
for $F\in \obj \pfi$ are $\dmlamc$-continuous in the sense of Remark \ref{rcont}(2).\footnote{I.e., 
if $k=\inli k_i$ and $E\in \obj\dmelam(k)\{r\}\cap \obj \dmlamc(k)$ (resp.  $E\in \dmlam(k)_{\tdmlam\ge r}\cap \obj \dmlamc(k)$) then there exists 
 $k_0\in \{k_i\}$ along with some   $E^0\in  \obj\dmelam(k_0)\{r\}\cap \obj \dmlamc(k_0)$ (resp.  $E^0\in \dmlam(k_0)_{\tdmlam\ge r}\cap \obj \dmlamc(k_0)$) such that 
 $
 E^0_k\cong E$.} 

\item\label{itcons} The $t$-structures $\tshlam$ and $\tdmlam$ are non-degenerate.

\item\label{itt} The "forgetful" functors $F^{\sht}:\shlam(k)\to \sht(k)$ and $F^{\dm}: \dmlam(k)\to \dm(k)$ provided by Proposition \ref{plocoeff}(\ref{icg5})  are "strictly right $t$-exact", i.e., for $M\in \obj \shlam(k)$ (resp. $M\in \obj \dmlam(k)$)  we have $F^{\sht}(M)\in \sht(k)_{\tsh\ge 0}$ if and only if $M\in\shlam(k)_{\tshlam\ge 0}$ (resp. $F^{\dm}(M)\in \dm(k)_{\tdm\ge 0}$ if and only if $M\in \dmlam(k)_{\tdmlam\ge 0}$). 


\item\label{ieffinf} In the case $p>0$ assume in addition that $p\in S$. Then any infinitely effective object of $\dmlamc(k)$ (see Definition \ref{deff}(2)) is zero.

\item\label{ieffdual} Assume 
 once again that $S$ contains $p$ if $p>0$. Then the categories $\shlamc(k)$ and $\dmlamc(k)$ are rigid (i.e., all their objects are 
dualizable). Moreover, $\shlamc(k)$ is the smallest thick subcategory of $\shtlam(k)$ containing all $\sinftlam(P_+)\{i\}$ for $P$ being smooth projective over $k$ and $i\in \z$; $\dmlamc(k)$ is the smallest thick subcategory of $\dmlam(k)$ containing all  $\mglam(P)\{i\}$.

\item\label{ieffst} 
 All morphisms from $S^0_\lam$ into $\shlam(k)_{\tshlam\ge 1}$ are zero ones. 
\end{enumerate}

\end{pr}
\begin{proof}
\ref{ieff1},  \ref{ieff2}, \ref{imk}. By  definitions of the corresponding classes (see Definition \ref{dcomp}(2)  and Proposition \ref{paisle}), it suffices to note that $\mshlam$, $\mdmlam$, and $M_{k,\lam}$ are exact functors that respect small coproducts.

\ref{iefftens}. Since the tensor product bi-functors for $\shlam(k)$ and $\dmlam(k)$ respect co-products when one of the arguments is fixed and also "commute with $-\{i\}$", it suffices to note that $\sinftlam (-_+)(\sv) \otimes \sinftlam (-_+)(\sv) \subset \sinftlam (-_+)(\sv) $ and $\mglam(\sv) \otimes \mglam(\sv) \subset \mglam(\sv)$.

\ref{ieff4}. Obviously, we can  assume $r=0$.  
Next, for any perfect field $F$ Remark \ref{rcomp} implies that 
   $\dmelam(F)\cap \obj \dmlamc(F)$  is the smallest thick 
subcategory of $\dmlam(F)$ containing $\mglam(X)$ for all $X\in \sv(F)$. 
Moreover, Theorem 3.7 of \cite{postov} (as well as the more general Theorem 3.2.1(2) of \cite{bpws})
implies that $\dmlam(F)_{\tdmlam\ge 0}\cap \obj \dmlamc(F)$
is the $\dmlam(F)$-envelope (see  \S\ref{snotata}) of  $\mglam(X)\{j\}[l]$ for  $X$ running through $ \sv(F)$, $j\in\z$, and $l\ge 0$.
Hence the assertion follows from Remark \ref{rcont}(4). 

\ref{itcons}. The statement easily reduces to the case $\lam=\z$ in which it is well-known (see Lemma 5.5(2) of  \cite{degmod} and  \S5.2 of \cite{morintao}; cf. also Corollary 3.3.7(1) of \cite{bondegl}). 

\ref{itt}.  The assertion is rather easy; it follows immediately from Proposition 5.6.2(II.3) of \cite{bpgws}.

	\ref{ieffinf}. Immediate from Theorem 2.2 of \cite{binters} (see also Remark 2.3(2) of ibid. 
	and Proposition \ref{pteff}(4) below).
	
\ref{ieffdual}. Immediate from Theorem 2.4.8 of \cite{bondegl} (that relies on Appendix B of \cite{lyz}); cf. also Lemma 2.3.1 of \cite{bzp} and Proposition 5.5.3 of \cite{kellyth} where independent proofs of the $\dmlamc(k)$-part of the assertion were given.

\ref{ieffst}. This is a well-known statement that 
  can be easily obtained from Example 5.2.2 of \cite{morintao}.
\end{proof}


We will also need  the effective versions of our homotopy $t$-structures along with some of their properties.

\begin{defi}\label{defft}
 1. Denote by $\tshelam$ (resp. $\tdmelam$) the $t$-structure on $\shelam(k)$ (resp. on $\dmelam(k)$) generated by $\sinftlam(X_+)$ (resp. by $\mglam(X)$) for $X\in \sv$. 

2. Denote by $i^{\sht}_\lam=i^{\sht}_{\lam,k}$ (resp. by $i^{\dm}_\lam=i^{\dm}_{\lam,k}$) the embedding $\shelam(k)\to \shlam(k)$. Their right adjoints (see Remark \ref{rtgen}(3)) will be denoted by $w^{\sht}_\lam$ and  $w^{\dm}_\lam$, respectively.

Omitting $k$, let us 
denote the compositions $i^{\sht}_\lam\circ w^{\sht}_\lam$  and $i^{\dm}_\lam\circ w^{\dm}_\lam$ by $\nu_{\shtlam}^{\ge 0}$ and $\nu_{\dmlam}^{\ge 0}$, respectively. Moreover, for any $r\in \z$ we will consider the functors  
$\nu_{\shtlam}^{\ge r}= (\nu_{\shtlam}^{\ge 0}(-\{-r\}))\{r\}$ and $\nu_{\dmlam}^{\ge r}= \nu_{\dmlam}^{\ge 0}(-\{-r\}))\{r\}$. 

3. For a homological functor $H$ from $\shlam(k)$ (resp. from  $\dmlam(k)$) with values in some abelian category  the symbol $\filt^rH$ will (similarly to \cite{levconv}) denote   the functor $E\mapsto \imm(H(\nu_{\shtlam}^{\ge r}(E))\to H(E))$ (resp.   $E\mapsto \imm(H(\nu_{\dmlam}^{\ge r}(E))\to H(E))$; here the connecting morphisms are induced by the corresponding counit transformations). 
\end{defi}


The following statements appear to be (quite easy and) rather well-known.

\begin{pr}\label{pteff}
1. The functors $i^{\sht}_\lam$ and $i^{\dm}_\lam$ are right $t$-exact with respect to the corresponding $t$-structures, whereas their right adjoints are $t$-exact.

Moreover, the compositions $\nu_{\shtlam}^{\ge 0}$  and $\nu_{\dmlam}^{\ge 0}$ respect coproducts. 

2. Denote  $\nu_{\shtlam}^{\ge 1}(S^0_\lam)$ by $\emlamo$. Then $\emlamo$ belongs to $\shelam(k)_{\tshelam\ge 0}$, and there exists a distinguished triangle $\emlamo\to S^0_\lam\to \emlam\to \emlamo[1]$, where  $\emlam$ is the image of $\emz$ (see Proposition \ref{pcont}(\ref{ishdmmg}))   in $\shtlam$.


3. The $\lam$-linear analogue $U_{k,\lam}$ of $U_k$ sends  $\dmelam(k)\{r\}$ into $\shelam(k)\{r\}$ and maps $\dmlam(k)_{\tdmlam\ge r}$ into $\shlamk_{\tshlam\ge r}$ (for any $r\in \z$).

4. Assume in addition that $S$ contains $p$ whenever $p>0$. Then for any  $M\in\obj \dmlamc(k)$ there exists $r\in \z$ such that $\nu_{\dmlam}^{\ge r}(M)=0$.
\end{pr}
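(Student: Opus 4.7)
The plan is to prove the four parts in order, exploiting adjunctions, generator-based reductions, and standard motivic facts. For Part 1, the right $t$-exactness of $i^{\sht}_\lam$ and $i^{\dm}_\lam$ is immediate: the generators $\sinftlam(X_+)$ (resp.\ $\mglam(X)$) of $\tshelam$ (resp.\ $\tdmelam$) are among the generators of the ambient $t$-structure at slice index zero, and both embeddings respect coproducts, extensions, and $[+1]$. The left $t$-exactness of the right adjoints is then immediate by adjunction, while the right $t$-exactness amounts to the identity $\shelam(k)_{\tshelam \ge 0} = \shelam(k) \cap \shlam(k)_{\tshlam \ge 0}$ (and its $\dm$-analogue): one inclusion is right $t$-exactness of the embedding, and the reverse is obtained by applying the $\tshelam$-truncation to an object of the intersection and using the triangle together with $\tshlam$-closure under extensions to force the complementary piece to vanish.

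Part 2 reduces via Part 1 to showing $\emlamo \in \shlam(k)_{\tshlam \ge 1}$, since $\emlamo$ is effective by Proposition \ref{peff}(\ref{is0sl}). The triangle $S^0_\lam \to \emlam \to \emlamo$ with $S^0_\lam, \emlam \in \tshlam \ge 0$ then reduces this further to the surjectivity of the Hurewicz map on $H_0^{\tshlam}$, which is classical (realizing the canonical augmentation of (the $\lam$-linear) Milnor-Witt $K$-theory onto the constant sheaf $\lam$) and essentially constitutes Theorem~10.5.1 of \cite{levht}. Part 3 then follows from the projection formula $U_{k,\lam}(M \otimes \mklam(E)) \cong U_{k,\lam}(M) \otimes E$ (standard for the monoidal adjunction $\mklam \dashv U_{k,\lam}$), which yields $U_{k,\lam}(\mglam(X)) \cong \sinftlam(X_+) \otimes \emlam$: this lies in $\shelam(k) \cap \shlam(k)_{\tshlam \ge 0}$ by tensor closure (Proposition \ref{peff}(\ref{iefftens})), using the effectivity of $\emlam$ and its $\tshlam$-connectivity (established in Part 2). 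The commutation $U_{k,\lam}(-\{j\}) \cong U_{k,\lam}(-)\{j\}$ (adjoint to the analogous commutation for $\mklam$) and the coproduct-preservation of $U_{k,\lam}$ then extend the conclusions: one checks the inclusions on generators $\mglam(X)\{r\}$ of $\dmelam(k)\{r\}$ and $\mglam(X)\{j\}[r]$ of $\dmlam(k)_{\tdmlam \ge r}$, using in the latter case that $\sinftlam(X_+)\{j\} \in \tshlam \ge 0$ as a generator of $\tshlam$ for every $j \in \z$.

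Part 4 is the main substantive step and the principal obstacle. By the rigidity Proposition \ref{peff}(\ref{ieffdual}) (which uses the hypothesis on $p$), $\dmlamc(k)$ is the thick subcategory of $\dmlam(k)$ generated by $\mglam(P)\{j\}$ for $P$ smooth projective over $k$ and $j \in \z$. Hence the given compact $M$ is a retract of a finite iterated extension of finitely many such generators $\mglam(P_\alpha)\{j_\alpha\}$; set $r_0 = 1 + \max_\alpha(j_\alpha + \dim P_\alpha)$. For any $r \ge r_0$, $X \in \sv$, and $i \in \z$, using Poincar\'e duality $\mglam(P_\alpha)^\vee \cong \mglam(P_\alpha)\{-d_\alpha\}[-d_\alpha]$ (with $d_\alpha = \dim P_\alpha$), one computes
\[
\dmlam(k)(\mglam(X)\{r\}[i], \mglam(P_\alpha)\{j_\alpha\}) \cong H^{2d_\alpha + j_\alpha - r - i,\, d_\alpha + j_\alpha - r}(X \times P_\alpha, \lam),
\]
which vanishes because the weight $d_\alpha + j_\alpha - r$ is negative and Voevodsky's theorem gives the vanishing of negative-weight motivic cohomology. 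Hence $\dmlam(k)(\dmelam(k)\{r\}, M) = 0$, which by the adjunction $i^{\dm}_\lam \dashv w^{\dm}_\lam$ is equivalent to $\nu_{\dmlam}^{\ge r}(M) = 0$ for all $r \ge r_0$.
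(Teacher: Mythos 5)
Your parts 2, 3, and 4 are sound and essentially parallel the paper's own argument: part 4 in particular carries out explicitly the duality computation that the paper delegates to Ayoub's Proposition 4.25, reducing $M\perp\dmelam(k)\{r\}$ (for $r$ large) to the vanishing of negative-weight motivic cohomology, and part 3 is the same projection-formula reduction to $U_{k,\lam}(\mglam(X))\cong\emlam\otimes\sinftlam(X_+)$.

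Part 1, however, contains a genuine gap in the claim that the right adjoints are right $t$-exact. You reduce this to the identity $\shelam(k)_{\tshelam\ge 0}=\shelam(k)\cap\shlam(k)_{\tshlam\ge 0}$ and propose to prove the hard inclusion by taking $E$ in the intersection, forming the $\tshelam$-truncation triangle $E_{\ge 0}\to E\to E_{\le -1}$, and ``forcing $E_{\le -1}$ to vanish by $\tshlam$-closure under extensions.'' But that triangle only tells you $E_{\le -1}$ is an extension of $E_{\ge 0}[1]$ by $E$, hence $E_{\le -1}\in\shlam(k)_{\tshlam\ge 0}$; to conclude $E_{\le -1}=0$ you would also need $E_{\le -1}\in\shlam(k)_{\tshlam\le -1}$, i.e., precisely the left $t$-exactness of $i^{\sht}_\lam$ (equivalently, right $t$-exactness of $w^{\sht}_\lam$) that you are trying to prove, so the argument is circular. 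Moreover, even if one grants the displayed identity, right $t$-exactness of $w^{\sht}_\lam$ does not follow formally: for a connective but non-effective $E$ (e.g.\ $S^0_\lam\{-n\}$ with $n>0$) one still has to show that the effective cover $w^{\sht}_\lam(E)$ is homotopy-connective, and nothing in the localization triangle for $E$ gives this. This is genuinely non-trivial content resting on Morel's connectivity theorem and the theory of homotopy modules, which is why the paper refers to Corollary 3.3.7(2) of \cite{bondegl} together with Theorem 5.2.6 of \cite{morintao} here rather than attempting a purely formal deduction.
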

\begin{proof} 1. The first part of the statement is obvious (cf. the proof of Proposition \ref{peff}(\ref{ieff2})). 

Its second part can be proved similarly to  Corollary 3.3.7(2) of \cite{bondegl} (and follows from it in "most" cases; in the remaining cases the arguments of loc. cit. may be combined with Theorem 5.2.6 of \cite{morintao}). 

Lastly, Remark \ref{rtgen}(1) (cf. also part 3 of that remark) immediately yields that the functors $\nu_{\shtlam}^{\ge 0}$  and $\nu_{\dmlam}^{\ge 0}$ respect coproducts in the corresponding categories.

2. $\nu_{\shtlam}^{\ge 1} (S^0_{\lam})$ belongs to $\shelam(k)_{\tshelam\ge 0}$ immediately from the previous assertion.

Next, in the case $\lam=\z$ the (existence and the)
 properties of the distinguished triangle in question 
 immediately follow from Theorem 10.5.1 of \cite{levht}; clearly, the general case follows from this one.

3. Similarly to the proof of Proposition \ref{peff}, it suffices to "control" $U_{k,\lam}(\mglam(X))$ for $X\in \sv$.
We clearly have $U_{k,\lam}(\mglam(X))\cong \emlam\otimes \sinftlam(X_+)$. The previous assertion obviously implies that $\emlam\in \shelam(k)_{\tshelam\ge 0}$; thus Proposition \ref{peff}(\ref{iefftens}) yields the result.

4. Immediate from Lemma 
 2.7 of \cite{binters} (whose proof is based on an argument from \cite{ayconj}).
\end{proof}

\subsection{
On the "homotopy 
conservativity" of  motivization 
}\label{sdiff}


Now we are able to prove that certain restrictions of $M_{k,\lam}^c$ "strictly respect homotopy connectivity"; this statement significantly strengthens 
 Theorem \ref{peasy}(I.2,II).  The reader may consult  
 sections \ref{sshcont} and \ref{sthom}  for the corresponding definitions. Note also that one can certainly take $\lam=\z$ in the following theorem.

\begin{theo}\label{tbach}

Let $E \in \obj \shlamc(k)\setminus  \shlam(-)_{\tshlam\ge r}$ for some $r\in \z$.
Then $M_{k,\lam}(E)\notin \dmlam(k)_{\tdmlam\ge r}$ (one may say that $E$ is not {\it $r-1$-homotopy connective}) whenever either (i)  $k$ is non-orderable or (ii) $E$ is $2$-torsion.  \end{theo}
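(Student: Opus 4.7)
The plan is to prove the contrapositive in both cases: assume $M_{k,\lam}(E)\in \dmlam(k)_{\tdmlam\ge r}$ and deduce $E\in \shlam(k)_{\tshlam\ge r}$. The argument parallels the proofs of Proposition \ref{peasy}(I.2) and (II) but is based on the finer $\dmlamc$-continuous subfunctor
\[
O(F):=\dmlam(F)_{\tdmlam\ge r}\cap \obj \dmlamc(F),
\]
whose $\dmlamc$-continuity is the content of Proposition \ref{peff}(\ref{ieff4}); this replaces the cruder functor $F\mapsto \{0\}$ used in the proof of Proposition \ref{peasy}.

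For case (i), Corollary \ref{cohdim} allows us to present $k=\inli k_i$ with all $k_i$ perfect and of finite cohomological $2$-dimension. Since $M_{k,\lam}(E)\in O(k)$ by hypothesis, Remark \ref{rcont}(3) applied to $O$ yields some $k_j\in \{k_i\}$ together with $E^j\in \obj \shlamc(k_j)$ such that $E^j_k\cong E$ and $M_{k_j,\lam}(E^j)\in O(k_j)$. The $\lam$-linear version of (the stronger part of) Bachmann's Theorem 15(b) --- which asserts precisely the required homotopy-connectivity detection for compact spectra over fields of finite cohomological $2$-dimension --- then gives $E^j\in \shlam(k_j)_{\tshlam\ge r}$, and Proposition \ref{peff}(\ref{ieff1}) transfers this back to $E\cong E^j_k\in \shlam(k)_{\tshlam\ge r}$.

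Case (ii) is handled along the same lines: write $k=\inli k_i$ with the $k_i$ perfect finitely generated, so that each $k_i$ has finite virtual cohomological $2$-dimension by Remark \ref{rvcd}(2). Combining the continuity of $\shlamc(-)$ with the continuity property for morphisms, one chooses $k_0$ and $E^0\in \obj \shlamc(k_0)$ with $E^0_k\cong E$ and $E^0$ still $2$-torsion (the identity $2^t\id_{E}=0$ is a morphism equation that descends after enlarging $k_0$ if necessary). The $\dmlamc$-continuity of $O$ then provides $k_j\supset k_0$ with $M_{k_j,\lam}(E^0_{k_j})\in O(k_j)$, the object $E^0_{k_j}$ remaining $2$-torsion; applying the $\lam$-linear analogue of Bachmann's Lemma 18 to the $2$-torsion compact object $E^0_{k_j}$ over the field $k_j$ and transferring back finishes the argument. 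The main technical point in either case will be checking that Bachmann's $\z$-linear statements (Theorem 15(b) and Lemma 18) actually deliver the $\lam$-linear counterparts invoked above; the cleanest route is to observe that Bachmann's arguments extend essentially verbatim once one has the $\lam$-linear analogues of the homotopy $t$-structure, slice filtration, and compact-generation results collected in \S\ref{sthom}, since lifting $E^j$ to a $\z$-linear compact spectrum loses control of the $\thom$-connectivity of its image in $\dm$.
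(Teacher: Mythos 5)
Your continuity reduction is exactly what the paper does: you correctly replace the crude functor $F\mapsto\{0\}$ by the finer $\dmlamc$-continuous subfunctor $O(F)=\dmlam(F)_{\tdmlam\ge r}\cap\obj\dmlamc(F)$ (Proposition~\ref{peff}(\ref{ieff4})), invoke Remark~\ref{rcont}(3), and transfer back with Proposition~\ref{peff}(\ref{ieff1}); this part is fine in both cases (i) and (ii).

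The gap is in the final step. You observe, correctly, that you cannot just lift $E^j$ to a compact $\z$-linear spectrum without losing control of $\thom$-connectivity, and you then propose to close the gap by asserting that Bachmann's arguments ``extend essentially verbatim'' to the $\lam$-linear setting. This is not a proof: you would in effect be re-establishing Bachmann's theorem with coefficients, and nothing in the proposal actually carries that out. The paper avoids this entirely with a short trick you did not find: it applies the fully faithful right adjoint $G:\shlam(k)\hookrightarrow\sht(k)$ of Proposition~\ref{plocoeff}(\ref{icg5}) to $E$. The object $G(E)$ is generally \emph{not} compact in $\sht(k)$, but by Remark~\ref{reff}(1) it is homotopy connective and slice-connective, which is exactly the hypothesis of the non-compact version of Bachmann's Theorem~15 (version~(i), as opposed to the compact version~(b) that applies for $\lam=\z$). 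Applying version~(i) to $G(E)$ over the field of finite cohomological $2$-dimension produced by the continuity reduction gives the $\lam$-linear statement directly, with no need to redo any of Bachmann's arguments. The same device handles case~(ii): $G(E)$ is a $2$-torsion, connective, slice-connective object of $\sht(k)$ over a finitely generated characteristic-$0$ field (hence of finite virtual cohomological $2$-dimension), and Lemma~18 of \cite{bachcons} applies to it as stated. So your reduction is right, but the bridge from $\z$-coefficients to $\lam$-coefficients is missing, and the intended bridge is the embedding $G$ rather than a $\lam$-linear rerun of Bachmann's proof.
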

\begin{proof}

First we assume that $k$ is non-orderable.
 Then once again (by Corollary \ref{cohdim}; cf. the proof of 
 Theorem \ref{peasy}(I.2)) we can present $k$ as $\inli k_i$, where the  cohomological ($2$)-dimensions of $k_i$ are finite.
Now      recall that the functor $F\mapsto  \dmlam(F)_{\tdmlam\ge r} \cap \obj \dmlamc(F)$ (from $\pfi$ into sets)  is $\dmlamc$-continuous; 
 see Proposition \ref{peff}(\ref{ieff4}). Hence Remark \ref{rcont}(3)  (combined with Proposition \ref{peff}(\ref{ieff1})) enables us to assume that the cohomological dimension of $k$ is finite.

Now, under this additional assumption the  $\lam=\z$-case of our assertion is given by 
Theorem 16(b) of \cite{bachcons}. 
In the general case we note that $E$ may be considered as an object of $\sht(k)$ via the embedding $G$ 
 mentioned in Proposition \ref{plocoeff}(\ref{icg5}); $G(E)$ is clearly homotopy connective and slice-connective in $\sht(k)$ (see Remark \ref{reff}(1)). Hence this case of our assertion follows from  version (i) of loc. cit. combined with Proposition \ref{peff}(\ref{itt}). 

Lastly, in the case (ii) we argue similarly to the proof of 
 Theorem \ref{peasy}(II). Consequently, we can (and will) assume that $k$ is a finitely generated field of characteristic $0$. Once again, $E$ yields a ($2$-torsion) homotopy connective and slice-connective object of  $\sht(k)$. So (after we invoke Proposition \ref{peff}(\ref{itt})) it remains to apply Lemma 19 of ibid.

\end{proof}

\begin{coro}\label{ceasylam}
If $k$ is non-orderable then the motivization functor $M_{k,\lam}^c$ is conservative. Moreover, the restriction of  $M_{k,\lam}^c$ to the subcategory of $2$-torsion objects is conservative for any (perfect) $k$.  
\end{coro}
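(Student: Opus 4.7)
The plan is to derive both assertions from Theorem \ref{tbach} combined with the non-degeneracy of the homotopy $t$-structure $\tshlam$ (Proposition \ref{peff}(\ref{itcons})). Since $M^c_{k,\lam}$ is exact between triangulated categories, its conservativity is equivalent to the implication ``$M^c_{k,\lam}(E)=0 \Rightarrow E=0$'', applied to the cone of any morphism whose image is an isomorphism.

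For the first claim, assume $k$ is non-orderable and let $E \in \obj\shlamc(k)$ satisfy $M_{k,\lam}(E)=0$. Then $M_{k,\lam}(E) \in \dmlam(k)_{\tdmlam\ge r}$ for every $r\in\z$, so the contrapositive of Theorem \ref{tbach}(i) places $E$ in $\shlam(k)_{\tshlam\ge r}$ for every $r$; non-degeneracy of $\tshlam$ then forces $E=0$.

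For the second claim the same scheme works via case (ii) of Theorem \ref{tbach}, but with one additional point to verify: in reducing conservativity to object-vanishing one passes to cones, so one needs to know that the cone $\co(f)$ of a morphism $f:E\to E'$ between $2$-torsion objects of $\shlamc(k)$ is itself $2$-torsion. I expect this to be the only real obstacle (and a minor one). A short diagram chase in the triangle $E\to E'\to \co(f)\to E[1]$, using the naturality of the endomorphisms $2^N\id_{(-)}$, shows that if $2^a\id_E=2^b\id_{E'}=0$ then $2^{a+b}\id_{\co(f)}=0$: indeed, $h\circ 2^a\id_{\co(f)}=2^ah=0$ (where $h:\co(f)\to E[1]$), so $2^a\id_{\co(f)}$ factors through the map $E'\to\co(f)$, and composing with $2^b\id_{E'}=0$ kills it. With this verified, applying Theorem \ref{tbach}(ii) to $\co(f)$ for every $r\in\z$ and invoking non-degeneracy of $\tshlam$ concludes the proof exactly as in case (i).
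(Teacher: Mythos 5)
Your argument is correct and follows essentially the same route as the paper: reduce conservativity to kernel-vanishing, then combine Theorem \ref{tbach} with the non-degeneracy of $\tshlam$ from Proposition \ref{peff}(\ref{itcons}). The only difference is that you spell out the (true, and worth noting) fact that the cone of a morphism of $2$-torsion objects is again $2$-torsion, which the paper leaves implicit when it identifies conservativity on $2$-torsion objects with the $\lam$-linear version of Proposition \ref{peasy}(II).
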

\begin{proof}
Obviously, this statement is equivalent to the $\lam$-linear version of 
 Theorem \ref{peasy}(I.2,II). Hence for $E \in \obj \shlamc(k)$ such that $M_{k,\lam}(E)=0$
we should check that $E=0$ whenever either   $k$ is non-orderable or  $E$ is $2$-torsion. Now, if $E\neq 0$ then Proposition \ref{peff}(\ref{itcons}) gives the existence of an integer $r$ such that 
$E \notin \shlam(-)_{\tshlam\ge r}$. Hence 
the assertion follows from Theorem \ref{tbach}.
\end{proof}

Combining 
 this corollary with 
 a theorem from \cite{bachinv}, we easily obtain the following result (slightly generalizing another Bachmann's statement).

\begin{pr}\label{pquadr}
Assume $p\neq 2$, $k$ is non-orderable,  and 
  $S$ contains $p$ if $p>0$. Let $\phi$ be a non-zero $k$-quadratic form and $a\in k\setminus \ns$. Then for the affine variety $X$ given by the equation $\phi=a$ the object $C=\co(\sinftlam(X_+)\to S^0_\lam)$ (corresponding to the structure morphism for $X$) is $\otimes$-invertible in  $\shlam(k)$.
\end{pr}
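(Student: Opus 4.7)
The plan is to bootstrap the $\otimes$-invertibility of $C$ in $\shlamc(k)$ from the analogous statement about $\mklam(C)$ in $\dmlamc(k)$ (Theorem 30 of \cite{bachinv}) by means of the conservativity of $\mklam^c$ established in Corollary \ref{ceasylam}.

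First, $C$ belongs to $\shlamc(k)$, since it is the cone of a morphism between compact objects, and under our assumption that $p\in S$ whenever $p>0$ Proposition \ref{peff}(\ref{ieffdual}) gives that $\shlamc(k)$ is rigid. Thus $C$ admits a strong dual $C^{\vee}\in\obj\shlamc(k)$ together with an evaluation morphism $ev\colon C\otimes C^{\vee}\to S^0_\lam$, and the $\otimes$-invertibility of $C$ is equivalent to $ev$ being an isomorphism, i.e.\ to the vanishing of $D:=\co(ev)\in\obj\shlamc(k)$.

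Next, apply the symmetric monoidal exact functor $\mklam$. Because any symmetric monoidal exact functor carries dualizable objects to dualizable ones and preserves the corresponding evaluation morphisms, $\mklam(C^{\vee})$ identifies naturally with the dual of $\mklam(C)\cong \co(\mglam(X)\to \mglam(\pt))$, and $\mklam(ev)$ becomes the evaluation morphism for $\mklam(C)$. Theorem 30 of \cite{bachinv} asserts that this latter object is $\otimes$-invertible in $\dmc(k)[\frac{1}{p}]$; applying the symmetric monoidal exact localization $\dmc(k)[\frac{1}{p}]\to \dmlamc(k)$ (available since $\lam$ is a $\z[\frac{1}{p}]$-algebra when $p>0$) we conclude that $\mklam(C)$ is $\otimes$-invertible in $\dmlamc(k)$, whence $\mklam(ev)$ is an isomorphism and $\mklam(D)=0$.

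Finally, Corollary \ref{ceasylam} applied to the non-orderable field $k$ gives that $\mklam^c$ is conservative on $\shlamc(k)$, so $D=0$ and $C$ is $\otimes$-invertible. The crucial technical point is the rigidity of $\shlamc(k)$ (Proposition \ref{peff}(\ref{ieffdual})), which encodes $\otimes$-invertibility as the vanishing of a single object of $\shlamc(k)$ and hence makes it detectable by the conservative functor $\mklam^c$; beyond this, the argument is a formal two-step reduction to \cite{bachinv}.
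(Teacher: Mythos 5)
Your argument is, in its structure, exactly the paper's: reduce $\otimes$-invertibility to the vanishing of the cone of the evaluation morphism using the rigidity of $\shlamc(k)$ from Proposition \ref{peff}(\ref{ieffdual}), push that cone through the symmetric monoidal exact functor $\mklam$, invoke Theorem 30 of \cite{bachinv} to get that $\mklam(C)$ is $\otimes$-invertible, and finish by conservativity (Corollary \ref{ceasylam}). Your extra care with the localization $\dmc(k)[\frac{1}{p}]\to\dmlamc(k)$ is correct and worth spelling out.

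However, there is a real gap: the proposition allows $\phi$ to be any \emph{non-zero} quadratic form, i.e.\ it is allowed to be degenerate, whereas Theorem 30 of \cite{bachinv} is stated under the hypothesis that $\phi$ is non-degenerate. As written, your proof silently applies that theorem outside its hypotheses. The paper opens its proof with the reduction that you omit: if the radical of $\phi$ has dimension $j\ge 0$ and $\phi'$ is the induced non-degenerate form on the quotient, then $X\cong X'\times\af^j$ for $X'$ the zero set of $\phi'-a$, so $\sinftlam(X_+)\cong\sinftlam(X'_+)$ by $\af^1$-invariance (Proposition \ref{pcont}(\ref{ishdmsinf})). Only after this reduction is $C$ identified with the corresponding cone for the non-degenerate form, at which point Theorem 30 of \cite{bachinv} applies. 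Without it, the argument is complete only for non-degenerate $\phi$.
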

\begin{proof}

Firstly note that $\phi$ may be assumed to be non-degenerate. Indeed, if the kernel of (the symmetric  bilinear form corresponding to) $\phi$ is of dimension $j\ge 0$ and $\phi'$ is the corresponding non-degenerate form then 
$X$ is isomorphic to the product of the zero set $X'$ of $\phi'-a$ by the affine space $\af^j$. Thus we have $\sinftlam(X'_+)\cong  \sinftlam(X_+)$ (see 
Proposition \ref{pcont}(\ref{ishdmsinf})).

Next,  $C\in \obj \shlamc(k)$; hence it is dualizable (see Proposition \ref{peff}(\ref{ieffdual})). Thus we should check whether the evaluation morphism $C\otimes C^{\vee}\to S^0_\lam$ is invertible (where $C^{\vee}$ is the dual to $C$). Since 
  $\mklamc$ is symmetric monoidal and also conservative (in this case), it suffices to verify that a similar fact is valid in $\dmlamc(k)$. The latter is immediate from Theorem 33 of \cite{bachinv} (where $\phi$ was assumed to be non-degenerate).
\end{proof}

\begin{rema}\label{rquadr} 
1. Certainly, it does not make much sense to consider $S\not \subset \{p\}$ in this statement.

2. Actually, in the introduction to \cite{bachcons} it is said $C$ is $\otimes$-invertible in  $\sht(k)$  also in the case of a formally real $k$; this statement appears to follow from the results of ibid. easily.  Clearly, our continuity arguments reduce this fact to the case where $k$ is a finitely generated 
  field.
\end{rema}

The following generalization of Theorem 2.2.1 of \cite{asok} follows easily as well. 

\begin{pr}\label{pasok}
Assume that $k$ is non-orderable; let $X/k$ be a smooth 
 variety.
Then the following statements are equivalent.

\begin{enumerate}


\item\label{iconn1}
The morphism $\sinftlam(X_+)\to S^0_\lam$  (induced by the structure morphism $X\to \spe k$) 
 gives $H_0^{\tshlam}(\sinftlam(X_+))\cong H_0^{\tshlam}(S^0_{\lam})$.

\item\label{iconn2}
We have a similar isomorphism $H_0^{\tshelam}(\sinftlam(X_+))\to H_0^{\tshelam}(S^0_{\lam})$ (here we consider $\sinftlam(X_+)$ and $S^0_\lam$ as objects of $\shelam(k)$).

\item\label{iconn3}
  $H_0^{\tdmlam}(\mglam(X))\cong H_0^{\tdmlam}(\lam)$.

\item\label{iconn4}
  $H_0^{\tdmelam}(\mglam(X))\cong H_0^{\tdmelam}(\lam)$.
\end{enumerate}

 
\end{pr}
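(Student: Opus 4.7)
The key observation is that the morphism $\tilde f: \sinftlam(X_+)\to S^0_\lam$ induced by the structure morphism $X\to \spe k$ is canonically split: it admits a section given by the inclusion of the disjoint basepoint $\spe k \hookrightarrow X_+$. Writing $\tilde X$ for its fiber, this yields a decomposition $\sinftlam(X_+)\cong S^0_\lam\oplus \tilde X$ in $\shlam(k)$; applying $\mklam$ and using Proposition \ref{pcont}(\ref{ishdmmg}) yields $\mglam(X)\cong \lam\oplus \tilde M(X)$, where $\tilde M(X):=\mklam(\tilde X)$. Since $t$-structure aisles, the classes of compact objects, and the localizing subcategories $\shelam(k)$, $\dmelam(k)$ are all closed under retracts, both $\tilde X$ and $\tilde M(X)$ are compact, effective, and lie in $\shlam(k)_{\tshlam\ge 0}$ and $\dmlam(k)_{\tdmlam\ge 0}$ respectively (cf. Remark \ref{reff}).

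Using these splittings, $H_0^{\tshlam}(\tilde f)$ decomposes as the projection $H_0^{\tshlam}(S^0_\lam)\oplus H_0^{\tshlam}(\tilde X)\to H_0^{\tshlam}(S^0_\lam)$ in the $\tshlam$-heart, and is therefore an isomorphism precisely when $H_0^{\tshlam}(\tilde X)=0$; combined with the connectivity of $\tilde X$, this is equivalent to $\tilde X\in \shlam(k)_{\tshlam\ge 1}$. Analogous reformulations convert (2), (3), (4) into the conditions $\tilde X\in \shelam(k)_{\tshelam\ge 1}$, $\tilde M(X)\in \dmlam(k)_{\tdmlam\ge 1}$, and $\tilde M(X)\in \dmelam(k)_{\tdmelam\ge 1}$, respectively.

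The equivalence (1) $\iff$ (2) is then immediate from Proposition \ref{pteff}(1): since $w^{\sht}_\lam$ is $t$-exact and $i^{\sht}_\lam$ is right $t$-exact and fully faithful, the $\tshlam$- and $\tshelam$-connectivities agree for any object of $\shelam(k)$; an identical argument using the $\dm$-analogues gives (3) $\iff$ (4). The crux of the proof is (1) $\iff$ (3): the forward direction is immediate from Proposition \ref{peff}(\ref{imk}), while the reverse implication (with $\tilde M(X)\in \dmlam(k)_{\tdmlam\ge 1}$ forcing $\tilde X\in \shlam(k)_{\tshlam\ge 1}$) is precisely the non-orderable case of Theorem \ref{tbach} applied to the compact object $\tilde X\in \shlamc(k)$. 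I do not anticipate any serious obstacle: once the splitting coming from the disjoint basepoint is recognised, all four $H_0$-isomorphism conditions collapse to clean connectivity statements on $\tilde X$ and $\tilde M(X)$, which are then directly detected by the main theorem.
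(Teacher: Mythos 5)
Your argument rests on the claim that the morphism $\sinftlam(X_+)\to S^0_\lam$ is \emph{canonically} split, with section ``given by the inclusion of the disjoint basepoint $\spe k\hookrightarrow X_+$.'' This is false. In the pointed category $X_+=X\sqcup\spe k$ has basepoint the extra copy of $\spe k$, and $S^0_\lam=\sinftlam(\spe k_+)$ where $\spe k_+=\spe k\sqcup\spe k$. A section of the collapse map $X_+\to \spe k_+$ must send the \emph{non-basepoint} copy of $\spe k$ to a $k$-rational point of $X$ (the basepoint of $S^0$ is forced to go to the basepoint of $X_+$, which after $\sinftlam$ is the zero map, not a section). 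So the section you invoke exists only when $X(k)\neq\emptyset$, which is not part of the hypotheses --- and in fact the proposition is precisely about detecting a stable version of this rational-point condition (see Remark \ref{rasok}(2)). Over a non-orderable field such as $\q_p$ one has plenty of smooth varieties with no rational point, so there is genuinely no $\tilde X$ and no direct-sum decomposition to start from, and the subsequent reductions to ``$\tilde X\in\shlam(k)_{\tshlam\ge 1}$'' etc.\ do not make sense.

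The correct direction of the implication is the one the paper uses: the splitting of $\sinftlam(X_+)\to S^0_\lam$ is a \emph{conclusion}, not a hypothesis. One sets $E=\co(\sinftlam(X_+)\to S^0_\lam)$, shows $M_{k,\lam}(E)\in\dmlam(k)_{\tdmlam\ge 1}$ under condition (3), applies Theorem \ref{tbach}(i) with $r=1$ to conclude $E\in\shlam(k)_{\tshlam\ge 1}$, and only then invokes Proposition \ref{peff}(\ref{ieffst}) to deduce that $\shlam(k)(S^0_\lam,E)=0$, hence that the triangle splits. Once the argument is reorganised this way the rest of your plan --- using Proposition \ref{pteff}(1) for (1)$\iff$(2) and (3)$\iff$(4), Proposition \ref{peff}(\ref{imk}) for (1)$\Rightarrow$(3), and Theorem \ref{tbach} for (3)$\Rightarrow$(1) --- is the right one and matches the paper.
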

\begin{proof}
Note that $\sinftlam(X_+)$ and   $S^0_\lam$ belong to $\shelam(k)_{\tshelam\ge 0}$, whereas $\mglam(X)$ and $\lam$ belong to  $\dmelam(k)_{\tdmelam\ge 0}$. Thus  Proposition \ref{pteff} easily implies that condition  \ref{iconn1} is equivalent to condition  \ref{iconn2}, and   \ref{iconn3} is equivalent to  \ref{iconn4}.

 Now assume that $H_0^{\tdmlam}(\mglam(X))\cong H_0^{\tdmlam}(\lam)$. Applying  Theorem \ref{tbach} (version (i)) in the case $r=1$, $E=\co (\sinftlam(X_+)\to S^0_\lam)$, we obtain that $E\in \shlam(k)_{\tshlam\ge 1}$. Applying Proposition \ref{peff}(\ref{ieffst}) (and considering the exact sequence $\dots \to \shlam(k)(S^0_\lam,\sinftlam(X_+))\to  \shlam(k)(S^0_\lam,S^0_\lam)\to \shlam(k)(S^0_\lam, E)=\ns$) we obtain a splitting $\sinftlam(X_+)\cong S^0_\lam \bigoplus E[1]$. Thus the application of  Theorem \ref{tbach} to our $E$ 
 also yields that our  condition  \ref{iconn3} implies  condition  \ref{iconn1}.

Lastly, applying this splitting argument we  easily obtain that condition \ref{iconn1} implies condition \ref{iconn3}; one should only apply Proposition \ref{peff}(\ref{imk}) instead of Theorem \ref{tbach}.
\end{proof}

\begin{rema}\label{rasok}

1. Surely, for 
$E$ as in this proof one can also apply Theorem \ref{tbach} to the spectrum $E/2=\co(E\stackrel{\times 2}{\to} E)$
(without having to assume that $k$ is non-orderable).

2. Now assume in addition that $X$ is (also) proper.

Then one can easily see (cf. Lemma 2.1.3 of \cite{asok}; here $k$ may be any perfect field) that condition \ref{iconn4} (and \ref{iconn3}) of our Proposition is fulfilled if and only if 
the kernel of the degree homomorphism
 $\operatorname{Chow}_{0}(X_L)\to \z$ is $S$-torsion
 and $X_L$ contains a zero-cycle whose degree 
 is a product of elements of $S$ for any field extension $L/k$.
Obviously, it suffices to verify the latter condition for $L=k$ only.

3. Under the assumption that $p$ belongs $S$ whenever it is positive one may formulate a much more general result of this sort.
Indeed, Corollary 3.3.2 of \cite{bscwh} gives for $M\in \obj \dmelamc(k)$ 
several  conditions equivalent to  $M\in \dmelam(k)_{\tdmelam\ge 0}$ (note that in ibid. the cohomological convention for $t$-structures is used). Most of these conditions   
are formulated in terms of the so-called Chow-weight homology of $M$. Thus 
 assuming that $E\in \obj \shelamc(k)$ is $2$-torsion whenever
$k$ is formally real, one obtains an answer to the question whether $\mklam(E)$ belongs to $\shelam(k)_{\tshelam\ge 0}$ in terms of certain complexes of Chow groups corresponding to $E$.

4. Clearly,  in the case $\lam=\z$ and $X$ being  proper we could have deduced (most of) our proposition directly from  Theorem 2.2.1 of \cite{asok} (using the $\dmc$-continuity of $\dm(-)_{\tdm\ge 0}\cap \obj \dmc(-)$).
\end{rema}

\section{
On 
infinite effectivity and 
other supplements }\label{skernel}

This section is dedicated to those results on the motivization kernel that are not (closely) related to the ones in the literature.

In \S\ref{sinfeff} we prove that the compact motivization functor $\mklam^c$ "strictly respects" the slice filtrations (on $\shlamc(k)$ and $\dmlamc(k)$, respectively); it also "detects" the filtration $\filt^*$ (see Definition \ref{defft}) on the lower $\thom$-homology of an object of $\shlamc(k)$.

 In \S\ref{salt}
 we describe an alternative method of the proof of 
Theorem \ref{tbach}(i) 
 (under the additional assumption that $p$ is invertible in $\lam$ whenever it is positive). 

In \S\ref{scobormod}
 we  explain that in all our results the 
 categories $\dmlam(-)$ may be replaced by the ($\lam$-linearized) categories $\shmglam(-)$ of  (strict) modules over the Voevodsky's motivic cobordism spectrum $\mgl$.

\subsection{The effectivity description of the "compact motivization kernel"}\label{sinfeff}

Now we prove "the most original" result of this paper. Recall that $\lam$ is  the coefficient ring for our motivic categories (it is an arbitrary localization of $\z$),  $M_{k,\lam}$ denotes the $\lam$-linear version of the "motivization" functor from the motivic stable homotopy category $\sht$ to $\dm$, $\shelam(k)\{r\}$ and  $\dmelam(k)\{r\}$  denote  the $r$-th levels of the slice filtrations on the corresponding categories, whereas  $\tshlam$ and $\tdmlam$ are the corresponding homotopy $t$-structures.

\begin{theo}\label{tineffkerm}
Let $r,m\in \z$, and $E$ is an object of $\shlam$.

I. Denote by $\shlam(k)_{\tshlam\ge m}^{\{r\}}$ the smallest class of objects of $\shlam(k)$ that  is stable with respect to extensions and arbitrary (small) coproducts, and contains $\sinftlam(X_+)\{j\}[i]$ whenever $i>m$ or if $i=m$ and $j\ge r$.

1. For any $r',m'\in \z$ we have $\shlam(k)_{\tshlam\ge m}^{\{r\}} \otimes \shlam(k)_{\tshlam\ge m'}^{\{r'\}}\subset \shlam(k)_{\tshlam\ge m+m'}^{\{r+r'\}}$. 

2. $\shlam(k)_{\tshlam\ge m}^{\{r\}}\subset \shlam(k)_{\tshlam\ge m}$, and  $\shlam(k)_{\tshlam\ge m}^{\{r\}}$ contains the classes 
 $\shlam(k)_{\tshlam\ge m+1}$ and $ \shelam(k)_{\tshelam\ge m}\{r\}=\nu_{\shtlam}^{\ge r}(\shlam(k)_{\tshlam\ge m})$ (see Definition \ref{defft}).

3.  The following conditions are equivalent. 

(i)  $E$ belongs to   $\shlam(k)_{\tshlam\ge m}^{\{r\}}$.  

(ii)  
$E$ belongs to $ \shlam(k)_{\tshlam\ge m}$ and $\filt^r H^{\tshlam}_m(E) $  (see Definition \ref{defft}(3)) equals the whole 
  object $H^{\tshlam}_m(E) $. 

(iii) $E$ is an extension of an element of $ \shlam(k)_{\tshlam\ge m+1}$ by an element of $ \shelam(k)_{\tshelam\ge m}\{r\}$.

4. Define $ \dmlam(k)_{\tdmlam\ge m}^{\{r\}}$ as the smallest class  of objects of $\dmlam(k)$ that  is stable with respect to extensions and arbitrary (small) coproducts, and contains  $\mglam(X)\{j\}[i]$ whenever $i>m$ or if $i=m$ and $j\ge r$.

Then an object $M$ of $\dmlam$  belongs to  $ \dmlam(k)_{\tdmlam\ge m}^{\{r\}}$ if and only if $M\in \dmlam(k)_{\tdmlam\ge m}$ and  $\filt^r H^{\tdmlam}_m(M)=  H^{\tdmlam}_m(M_{k,\lam}(M))$. 

Moreover, these conditions are fulfilled if and only if $M$ is  an extension of an element of  $\dmlam(k)_{\tdmlam\ge m+1}$ by an element of $ \dmelam(k)_{\tdmelam\ge m}\{r\}$.

5. $M_{k,\lam}(\shlam(k)_{\tshlam\ge m}^{\{r\}})\subset  \dmlam(k)_{\tdmlam\ge m}^{\{r\}}$ and $U_{k,\lam}( \dmlam(k)_{\tdmlam\ge m}^{\{r\}})\subset \shlam(k)_{\tshlam\ge m}^{\{r\}}$.

 
II. Assume that $E$ is slice-connective (i.e., belongs to $\obj\shelam\{i\}$ for some $i\in \z$). 

Then the following statements are valid.

1. $E\in \obj \shelam(k)\{r\}$ if and only if $M_{k,\lam}(E)\in \obj \dmelam(k)\{r\}$. In particular, $E\in \cap_{j\in \z}  \obj \shelam(k)\{j\}$ (i.e., it is infinitely effective) if and only if $M_{k,\lam}(E)$ also is.

2. Assume that $E\in  \shlam(k)_{\tshlam\ge m}$. Then  $E$ also belongs to $\shlam(k)_{\tshlam\ge m}^{\{r\}}$ (see assertion I.1) 
 if and only if  $\filt^r H^{\tdmlam}_m(M_{k,\lam}(E))=  H^{\tdmlam}_m(M_{k,\lam}(E))$ (cf. assertion I.4).

III. Assume in addition that $E\in  \obj \shlamc(k)$; if $p>0$ then suppose also that $p\in S$. 

1.  $E$ 
is infinitely effective if and only if $M_{k,\lam}(E)=0$.

2. If 
$E\in  \shlam(k)_{\tshlam\ge m}$ then $M_{k,\lam}(E)\in \dmlam(k)_{\tdmlam \ge m+1}$ if and only if $E$ also belongs to $\shlam(k)_{\tshlam\ge m}^{\{s\}}$ for all $s\in\z$. 
\end{theo}
\begin{proof}

I. Obviously, one can assume $m=r=0$.

1.  
 It suffices to recall that the tensor product on $\shlam(k)\subset \sht(k)$ respects coproducts and distinguished triangles, and $\sinftlam(X_+)\otimes \sinftlam(Y_+)\cong \sinftlam(X\times Y_+)$ for any smooth $k$-varieties $X$ and $Y$. 

2. 
$\shlam(k)_{\tshlam\ge 1}\subset \shlam(k)_{\tshlam\ge 0}^{\{0\}}\subset \shlam(k)_{\tshlam\ge 0}$ immediately from the description of $\shlam(k)_{\tshlam\ge 0}$ provided by Proposition \ref{paisle} (see also Definition \ref{deff}(1)). Similarly, the class $\shlam(k)_{\tshlam\ge 0}^{\{0\}}$ contains $\shelam(k)_{\tshelam\ge 0}$; see Definition \ref{defft}(1).

Lastly, recall that $\nu_{\shtlam}^{\ge 0}=i^{\sht}_\lam\circ w^{\sht}_\lam$. The functor $w^{\sht}_\lam$ is $t$-exact by Proposition \ref{pteff}(1); hence $w^{\sht}_\lam(\shlam(k)_{\tshlam\ge 0})\subset \shelam(k)_{\tshelam\ge 0}$. It remains to recall that the restriction of the functor $w^{\sht}_\lam$ to $\shelam(k)\subset \shlam(k)$ is  the identity; hence 
 the class $ \shelam(k)_{\tshelam\ge 0}\{0\}$ equals $\nu_{\shtlam}^{\ge 0}(\shlam(k)_{\tshlam\ge 0})$ indeed.


3. 
Applying assertion I.2 we immediately obtain that condition (iii) implies condition (i). 

Now assume that $E$ fulfils  condition (ii).
Assertion I.2 says that $\nu_{\shtlam}^{\ge 0}(E)\in \shelam(k)_{\tshelam\ge 0}$. 
 We take the obvious distinguished triangle
\begin{equation}\label{esl}
 \nu_{\shtlam}^{\ge 0}(E)\to E\to C\to \nu_{\shtlam}^{\ge 0}(E)[1];\end{equation}
and obtain that that $E$  satisfies condition (iii) whenever $C\in \shlam(k)_{\tshlam\ge 1}$.
Consider the long exact sequence $\dots\to H^{\tshlam}_0(\nu_{\shtlam}^{\ge 0}(E))\stackrel{f}{\to} H^{\tshlam}_0(E)\to H^{\tshlam}_0(C)\to H^{\tshlam}_{-1}(\nu_{\shtlam}^{\ge 0}(E))\to \dots$ coming from (\ref{esl}). Condition (ii) implies that $f$ is epimorphic. Since  $H^{\tshlam}_{-1}(\nu_{\shtlam}^{\ge 0}(E))=0$, we obtain that   $H^{\tshlam}_0(C)$ is zero as well. Since (\ref{esl}) also implies that  $C\in \shlam(k)_{\tshlam\ge 0}$, we  obtain that  $C\in \shlam(k)_{\tshlam\ge 1}$ indeed.

Lastly, to prove that all elements of  $ \shlam(k)_{\tshlam\ge 0}^{\{0\}}$ satisfy condition (ii) (i.e., to prove that (i) implies (ii)) it clearly suffices to  prove that this condition is fulfilled for all elements of $ \shlam(k)_{\tshlam\ge 1}$ and  $ \shelam(k)_{\tshelam\ge 0}$, and the class of spectra satisfying  condition (ii)  is closed with respect to coproducts and extensions. Now, the first of these statements is obvious, and the second one immediately follows from the fact that the functors $H^{\tshlam}_0$ and $\nu_{\shtlam}^{\ge 0}(E)$ respect coproducts (see Remark \ref{rtgen}(1) and Proposition \ref{pteff}(1)). 

	Thus it remains  to verify for any $\shlam$-distinguished triangle \begin{equation}\label{etr}
 A\to B\to C\to A[1]\end{equation} such that $A$ and $C$ satisfy condition (ii) that  the spectrum $B$ satisfies this condition as well. Since the class $\cu_{t\ge 0}$ is extension-closed for any $t$-structure on a triangulated category $\cu$, it is sufficient to check that $H^{\tshlam}_0( \nu_{\shtlam}^{\ge 0}(B))$ surjects onto $H^{\tshlam}_0(B)$. Now, we apply $H^{\tshlam}_0$ to the distinguished triangle (\ref{etr}) and its image under the exact endofunctor $\nu_{\shtlam}^{\ge 0}$ to obtain the following commutative diagram with exact rows:
$$\begin{CD}
 H^{\tshlam}_{0}(\nu_{\shtlam}^ {\ge 0}(A))@>{}>>H^{\tshlam}_{0}(\nu_{\shtlam}^ {\ge 0}(B)) @>{}>>H^{\tshlam}_{0}(\nu_{\shtlam}^ {\ge 0}(C)) @>{}>> H^{\tshlam}_{-1}(\nu_{\shtlam}^ {\ge 0}(A)) \\
@VV{c}V@VV{d}V@VV{e}V@VV{}V \\
H^{\tshlam}_{0}(A) @>{}>>H^{\tshlam}_{0}(B) @>{}>>H^{\tshlam}_{0}(C) @>{}>>H^{\tshlam}_{-1}(A)
\end{CD}$$
 The spectrum $\nu_{\shtlam}^ {\ge 0}(A)$ 
  belongs to $ \shlam(k)_{\tshlam\ge 0}^{\{0\}}\subset  \shlam(k)_{\tshlam\ge 0}$ according to assertion I.2;  hence  $H^{\tshlam}_{-1}(\nu_{\shtlam}^ {\ge 0}(A))=0$. 
  Thus the surjectivity of $c$ and $e$ implies that of $d$, and we conclude the proof.

4. The proof is the obvious $\dmlam$-version of that of assertion I.3.

5. The statement immediately follows from the two previous assertions since $M_{k,\lam}(\shlam(k)_{\tshlam\ge m})\subset  \dmlam(k)_{\tdmlam\ge m}$,  $M_{k,\lam}(\shelam(k)\{r\}) \subset \dmelam\{r\}$,   $U_{k,\lam}(\dmelam\{r\})\in \shelam(k)\{r\}$,  and $U_{k,\lam}(\dmlam(k)_{\tdmlam\ge m+1})\subset\shlamk_{\tshlam\ge r}$; see Proposition \ref{peff}(\ref{imk}) and Proposition \ref{pteff}(3).

II.1. Proposition \ref{peff}(\ref{imk}) immediately gives the "only if" implication.

Now we prove the "if" part of the assertion; so we assume that $M_{k,\lam}(E)\in \obj \dmelam(k)\{r\}$.

By the definition of slice-effectivity, $E$ belongs to $\obj\shelam(k)\{r'\}$ for some $r'\in \z$; take the maximal 
 $r'\le r$ such that this inclusion is fulfilled (in particular, we take $r'=r$ if  $E$ belongs to $\obj\shelam(k)\{r''\}$ for some $r''\ge r$, since $\shelam(k)\{r\}\subset \shelam(k)\{r''\}$ in this case). 

Consider the distinguished triangle \begin{equation}\label{eukmk}
\emlamo\otimes E\to E\to \emlam\otimes E= U_{k,\lam}(M_{k,\lam}(E))\to \emlamo\otimes E[1];
\end{equation} 
  see Proposition \ref{pteff}(2). Recall that $\emlamo=\nu_{\shtlam}^{\ge 1}(S^0_\lam) \in \obj\shelam(k)\{1\}$; hence $\emlamo\otimes E\in \obj\shelam(k)\{r'+1\}$ (see Proposition \ref{peff}(\ref{iefftens})). 
	Since $U_{k,\lam}(M_{k,\lam}(E))\in \obj\shelam(k)\{r\}$ (see Proposition \ref{peff}(3)), we obtain that 
  $E\in \obj\shelam(k)\{r'+1\}$ whenever $r'<r$. Thus $r'=r$.  

2.  The "only if" statement is given by assertion I.5.

The proof of the converse implication is similar to that of the previous assertion.
 There clearly exists $r'\in \z$ such that $\filt^{r'} H^{\tshlam}_m(E) = H^{\tshlam}_m(E) $ and $r'\le  r$, and we choose the maximal  $r'\le r$ 
 such that this equality is fulfilled. According to assertion I.3, $E$ belongs to $\shlam(k)_{\tshlam\ge m}^{\{r'\}}$. 

Now we look  at the triangle (\ref{eukmk}). Since $\emlamo=\nu_{\shtlam}^{\ge 1}(S^0_\lam)$ (see Proposition \ref{pteff}(2)) and $S^0_\lam\in \shlam(k)_{\tshlam\ge 0}$, 
 we obtain $\emlamo\in \shlam(k)_{\tshlam\ge 0}^{\{1\}}$ by assertion I.2; hence $\emlamo\otimes E\in \shlam(k)_{\tshlam\ge m}^{\{r'+1\}}$ according to assertion I.1. Next, the spectrum $\emlam\otimes E= U_{k,\lam}(M_{k,\lam}(E))$ belongs to  $\shlam(k)_{\tshlam\ge m}^{\{r\}}$ according to assertion I.5. Thus if $r'<r$ then $E$ belongs to $\shlam(k)_{\tshlam\ge m}^{\{r'+1\}}$ since this class is extension-closed by definition. Therefore $r'=r$ (see assertion I.3 once again).

III.1.  If $M_{k,\lam}(E)=0$ then  $E$ is infinitely effective according to  assertion II.1. 

Conversely, since $M_{k,\lam} (\obj\shelam(k)\{s\})\subset \obj \dmelam(k)\{s\}$ for any $s\in \z$, we obtain that $M_{k,\lam}$ respects infinite effectivity. Lastly,  Proposition \ref{peff}(\ref{ieffinf}) says that there are only zero infinitely effective compact objects in $\dmlam$.

2. According to assertion II.2, for any $s\in \z$ we have  $\filt^s H^{\tshlam}_m(E) = H^{\tshlam}_m(E) $  if and only if $\filt^s H^{\tdmlam}_m(M_{k,\lam}(E))=  H^{\tdmlam}_m(M_{k,\lam}(E))$. Now, by Proposition \ref{pteff}(4), 
the latter is equivalent to $H^{\tdmlam}_m(M_{k,\lam}(E))=0$ (if we take $s$ to be large enough); combined with Proposition \ref{peff}(\ref{imk}) this yields the result.
\end{proof}

\begin{rema}\label{rwc}
1. So we obtain that $\mklam^c$ induces an exact  conservative functor from the localization of $\shlamc(k)$ by its subcategory of infinitely effective objects into $\dmlamc(k)$ (under the assumption that   $p\in S$). 

Now, recall that the same restriction on $S$ ensures the existence of an exact conservative {\it weight complex} functor $ \dmlamc(k)\to K^b(\chowlam(k))$
(that was essentially constructed in \cite{bws}  for $p=0$ and in \cite{bzp} in the case $p>0$; see \cite[Propositions 3.1.1, 2.3.2]{bonivan} for the $\lam$-linear formulation).
Thus the composition functor  is conservative as well; if $k$ is non-orderable this is actually a functor $\shlamc(k)\to  K^b(\chowlam(k))$ (by Corollary \ref{ceasylam}).

2. Note however  that $\eta\neq 0$ unless $2\in S$ (by Theorem 6.3.3 
of \cite{morintao}); hence there cannot exist a {\it Chow weight structure} on $\shlamc(k)$ in this case (i.e., the motivic spectra of smooth projective varieties cannot belong to the heart of any weight structure on $\shlamc(k)$;  see the easy 
 Remark 5.2.7(6) of \cite{bgn}) and this composed version of the weight complex functor does not come from a weight structure.

On the other hand,  in 
\cite{bokum} 
an interesting  weight structure $w_{\chow}^{eff}$ on $\dmelam(k)$ that is {\it generated} by motives of all smooth varieties was considered. 
  $w_{\chow}^{eff}$ "naturally" extends to $\dmlam(k)$; the heart of the resulting weight structure $w_{\chow}$ naturally contains the category of Chow motives. Now, it may make sense to consider  similar definitions for $\shelam(k)\subset \shlam(k)$; yet the hearts of the resulting weight structures 
	 probably will not be closely related to Chow motives.   


3. It appears that some of the parts of our theorem can be extended to 
certain filtrations on $\shtlam(k)$ and $\dmlam(k)$ distinct from that given by $\shlam(k)_{\tshlam\ge m}^{\{r\}}$ and $\dmlam(k)_{\tdmlam\ge m}^{\{r\}}$; cf. \S3.3 and Remark 5.3.3(8) of \cite{bscwh}.
\end{rema}


\subsection{An alternative 
argument for Theorem \ref{tbach}(i)} 
\label{salt}

Now we describe an alternative proof of 
version (i) of Theorem \ref{tbach} 
that relies on 
  the paper \cite{levconv} instead of \cite{bachcons} (that is more complicated and   based on related results of M. Levine as well). This reasoning requires us to assume that $p\in S$ whenever $p>0$. Note also that this version  of our theorem
 obviously implies the 
$\zop$-linear version of 
 Theorem \ref{peasy}(I.2).

So, we suppose that $k$ is non-orderable. Then the continuity argument used in the proof of Theorem \ref{tbach}  allows us to assume (once again) that the cohomological dimension of $k$ is finite.

According to 
Theorem \ref{tineffkerm}(III.2) (combined with Remark \ref{reff} and Proposition \ref{peff}(\ref{imk})),
 it suffices to prove (under our assumption on $S$ and for some $m\in \z$) that the filtration $\filt^*$ on $H^{\tshlam}_m(E)$ is 
non-trivial (i.e., that $H^{\tshlam}_m(E)$ does not lie in its own $\filt^s$ for all $s\in \z$) for any $E$ belonging to $ \obj \shlamc(k)\cap \shlam(k)_{\tshlam\ge m}\setminus  \shlam(k)_{\tshlam\ge m+1}$. 

Next, the  well-known 
Proposition 5.1.1(5) of \cite{bgn} (that is an easy consequence of \cite[Lemma 4.2.7]{morintao})
implies the following: it suffices to verify that the filtration in question is "separated at function field stalks", i.e., that for any finitely generated field $L/k$ and $j\in \z$ the filtration induced by    $\filt^*H^{\tshlam}_m(E)$ on the result of the "evaluation of $E\{j\}$ at $L$" (see \S3.2.1 of \cite{degorient}) is separated.

Once again, we consider  $E$ as an object of $\shtk$ using the embedding $G$ described in Proposition \ref{plocoeff}(\ref{icg5}). Then it is {\it cohomologically finite} in the sense of  Definition 6.1 of \cite{levconv}. Indeed, 
$E$ satisfies condition (ii) of loc. cit. since it 
 is homotopy connective (see Remark \ref{reff}). It satisfies condition (i) of the definition according to  Proposition 6.9(3) of ibid. combined with Proposition \ref{peff}(\ref{ieffdual}) above. 
Hence the separatedness in question is given by Theorem 7.3 of \cite{levconv}; this finishes the proof.


\begin{rema}\label{rcompy}
1. The "yoga" of this argument (as well as of  Theorem \ref{tbach} itself) is that certain types of assertions concerning {\bf compact} objects of $\shlam$ can be reduced to the case where the (virtual) cohomological dimension of $k$ is finite. Now, under this additional assumption one can apply the appropriate properties 
(as studied Bachmann and Levine) of certain subcategories of $\shtk$ 
 that are bigger than $\shtc(k)$ (or $\shlamc(k)$ for the corresponding $\lam$).    So, in our main statements we  restrict ourselves to compact motivic spectra; this enables us to 
 establish them over a wide class of base fields. This method appears to be quite useful since (most of)  motivic spectra "coming from geometry" are compact. Also, one "usually" does not apply $\mk$ to non-compact objects of $\sht$ (that are mostly used for representing various cohomology theories).


2. One may also apply some of the arguments of  \cite{levconv} for proving 
 version (ii) of  Theorem \ref{tbach}.

\end{rema}

\subsection{On  cobordism-module versions of the main results}\label{scobormod}

Now recall that the category  $\dmk$ is closely related to the homotopy category of highly structured modules over the ring object $\emz$ in 
the model category of motivic symmetric spectra underlying $\sht$ (see  Proposition 38 of \cite{roe}).

The goal of this section is to explain  that our main results are also valid if we replace $M_{k}$ (and $M_{k,\lam}$) by the corresponding functor $M^{\mgl}_{k}:\shtk\to \shmgl(k)$, where the latter is the (stable) homotopy category of the category $\mglmod$ of (strict left) modules over the Voevodsky's 
 spectrum $\mgl$ (see \S1.3 of \cite{bondegl}). Similarly to \cite{roe}, one can verify the existence of  $M^{\mgl}_{k}$ given by the  "free $\mgl$-module functor"; the corresponding forgetful functor yields the right adjoint   $U^{\mgl}_{k}$ to  $M^{\mgl}_{k}$. 
For $S\subset \p$ we will also consider the corresponding $\shmglam(-)$.

Now assume that $S$ contains $p$ if $p>0$ (the author is not  sure whether this is really necessary).  

Then there are three possible ways of proving Theorem \ref{tbach}  with $M_{k,\lam}$ replaced by $M_{k,\lam}^{\mgl}$ (and for the corresponding homotopy $t$-structure for $\shmglam(k)$). 

Firstly, one may prove that in all the results of  \S\ref{sthom} one may replace $\dmlam(-)$ by $\shmglam(-)$. The key points here are the following ones: $\mgl\in \sht(k)_{\tsh\ge 0}$ (see Corollary 3.9 of \cite{hoycobord}); the corresponding analogue of  Proposition \ref{peff}(\ref{ieffdual})  is given by Theorem 5.2.6 of \cite{morintao}, whereas the  $\mgl$-analogue of 
 Proposition \ref{pteff}(4) follows from the vanishing of  
$\sht(F)(S^0, \mgl\{j\}[i])$ for any $i\in \z$, $j<0$, and any perfect field $F$ (that follows from  Theorem 8.5 of \cite{hoycobord}). 
Having these statements one can easily verify that the corresponding analogue of Theorem \ref{tineffkerm} is valid as well (probably one can also deduce this statement from Lemma 7.10 of ibid.). 
The latter allows to deduce the result in question from its $\dmlam(-)$-analogue (i.e., from Theorem \ref{tbach}) immediately.

Another possibility is to deduce the $\shmglam$-analogue of  Theorem  \ref{tbach} from the corresponding version of Theorem 12 of \cite{bachcons}; the latter statement  easily follows from Remark 4.3.3 of \cite{bondegl}.


Lastly, one hopes (see Remark 1.3.4 of \cite{bondegl}) that there exists an  (important) commutative diagram
$$\begin{CD}
 \shlam(k)@>{M^{\mgl}_{k,\lam}}>>\shmglam(k)\\
@VV{}V@VV{}V \\
D_{\afo,\lam}(k)@>{M^D_{k,\lam}}>>\dmlam(k)
\end{CD}$$
of functors; here we use the notation of   (Example 1.3.3 of)  
 ibid. for the lower left hand corner of this diagram; cf. also \S1 of \cite{bachcons}. 
  Certainly, this conjecture implies its "compact" analogue, 
 and it can be applied to the study of the conservativity of connecting functors. 

This diagram (along with Proposition 2.3.7 of \cite{bondegl}) 
 also implies that one may replace all $M_{-,\lam}$ in our statements by $M^D_{-,\lam}$. 


\end{document}